\documentclass[oneside, a4paper,11pt,reqno]{amsart}

\usepackage{amssymb,amsfonts,amsmath,curves}
\usepackage[T1]{fontenc}
\usepackage{psfrag}

\usepackage{verbatim}
\usepackage{amsmath}
\usepackage{amsfonts}
\usepackage{color}
\usepackage{amssymb}
\usepackage{epsfig}
\topmargin -0.5in
\textheight 23cm
\textwidth 16.5cm
\oddsidemargin -1mm
\evensidemargin 0.25in

\numberwithin{equation}{section}

\newtheorem{theo}{Theorem}
\newtheorem{proposition}[theo]{Proposition}
\newtheorem{corollary}[theo]{Corollary}
\newtheorem{lemma}[theo]{Lemma}

\def \E {\mathbb{E}}
\def \P {\mathbb{P}}
\def \N  {\mathbb{N}}
\def \R  {\mathbb{R}}

\def \Z  {\mathbb{Z}}

\title[Random walk in the low disorder regime]{
Asymptotic expansion of the invariant  measure for ballistic random walk in the low disorder regime}

\author{David Campos and Alejandro F. Ram\'\i rez
}

\address{Escuela de Matem\'atica, Universidad de Costa Rica, josedavid.campos@ucr.ac.cr \and
Facultad de Matem\'aticas, Pontificia Universidad Cat\'olica de Chile, aramirez@mat.puc.cl}
\thanks{Both authors have been
partially supported by Iniciativa Cient\'\i fica Milenio NC120062
and by Fondo Nacional de Desarrollo Cient\'\i fico
y Tecnol\'ogico grant 1141094.}

\date{\today}
\keywords{Random walk in random environment, Green function, asymptotic
expansion.}
\subjclass[2010]{60K37, 82D30, 82C41.}

\begin{document}

\begin{abstract}
We consider a random walk in random
environment in the low disorder regime
on $\mathbb Z^d$.
That is, the probability that the
random walk jumps from a site $x$
to a nearest neighboring site $x+e$
is given by $p(e)+\epsilon \xi(x,e)$, where
$p(e)$ is deterministic, $\{\{\xi(x,e):|e|_1=1\}:x\in\mathbb Z^d\}$
are i.i.d. and $\epsilon>0$ is a 
parameter which is eventually chosen small
enough.  We establish an asymptotic
expansion  in $\epsilon$ for the invariant measure
of the environmental process whenever
a ballisticity condition is satisfied. As an application of
our expansion, we derive a numerical expression up
to first order in $\epsilon$ for the invariant measure
of random perturbations of the simple symmetric random walk
in dimensions $d=2$.
\end{abstract}

\maketitle

\section{Introduction}

We derive an asymptotic expansion for
the invariant measure of
 the environmental process of random walks 
moving on $\mathbb Z^d$
in the low disorder regime 
within
 the spirit of previous expansions of Sabot \cite{Sa04}
for the velocity. Our result is one of the few 
instances where  explicit quantitative information
about the invariant measure of the environmental
process is given for non-reversible random walks
in random environments in dimensions $d\ge 2$.

For $x\in \R^d$ we denote by $|x|_1$ and $|x|_2$ its
$l^1$ and $l^2$ norms respectively. Let $V:=\{e\in\mathbb Z^d: |e|_1=1\}$
and $\mathcal P:=\{p_e:e\in V\}$ where $p_e\ge 0$ and
$\sum_{e\in V}p_e=1$. We define $\Omega:=\mathcal P^{\mathbb Z^d}$
endowed with its Borel $\sigma$-algebra
and denote any $\omega=\{\omega(x):x\in\mathbb Z^d\}\in \Omega$
where
for each $x\in\mathbb Z^d$ we let 
$\omega(x)=\{\omega(x,e):e\in V\}\in\mathcal P$, an {\it environment}.
We now define the {\it random walk in the environment} $\omega$
starting from $x\in\Z^d$
as the Markov chain $\{X_n:n\ge 0\}$ with
state space $\Z^d$ defined by the  transition probabilities

$$
P(X_{n+1}=x+e|X_n=x)=\omega(x,e),
$$
for  $e\in V$. We denote by $P_{x,\omega}$ its law.
Throughout we will assume that the space of environments $\Omega$ is
endowed with a probability measure $\P$. We will call $P_{x,\omega}$
the {\it quenched law} of the random walk, while $P_x:=\int P_{x,\omega}d\P$
the {\it averaged} or  {\it annealed} law of the random walk.
We will  suppose that $\{\omega(x):x\in\mathbb Z^d\}$ are i.i.d.
under $\P$. The law $\P$ is said to be uniformly elliptic if
there exists a $\kappa>0$ such that for all $x\in\mathbb Z^d$ and
$e\in V$,

$$
\P(\omega(x,e)\ge\kappa)=1.
$$

\medskip

\noindent Define $\mathcal P_0:=\{p\in\mathcal P:\min_{e\in V}p(e)>0\}$.
Consider a transition kernel $p_0=\{p_0(e):e\in V\}\in\mathcal P_0$.
For our main result, we will consider laws $\P$ which are
perturbations of a simple random walk which
jumps according to the transition kernel $p_0$:
for each $\epsilon>0$ we define

\begin{equation}
\label{epsilon-condition}
\Omega_{p_0,\epsilon}:=
\left\{\omega\in\Omega: {\rm for}\ {\rm all}\ x\in\mathbb Z^d, e\in V\
{\rm one}\ {\rm has}\ {\rm that}\ 
\left|\omega(x,e)-p_0(e)\right|\le\epsilon\right\}.
\end{equation}
Let us note that for $\epsilon$ small enough, each probability measure
concentrated on $\Omega_{p_0,\epsilon}$  is uniformly elliptic.
Also,  recall the
definition of the local drift for $x\in\mathbb Z^d$ as
$$
d(x,\omega):=\sum_{e\in V}\omega(x,e)e.
$$
For $\omega\in\Omega$, define the canonical
shifts $\{\theta_x:x\in\mathbb Z^d\}$ as
$\theta_x\omega(y):=\omega(x+y)$.
Finally,  define the {\it environmental process}
$\{\bar\omega_n:n\ge 0\}$ starting from $\bar\omega_0=\omega$
as

$$
\bar\omega_n:=\theta_{X_n}\omega.
$$
\medskip

\noindent To state the main result of this article, let us
define for each $\omega\in\Omega$, $x\in\mathbb Z^d$ and $e\in V$,

\begin{equation}
\label{xi}
\xi(x,e):=\frac{1}{\epsilon}\left(\omega(x,e)-p_0(e)\right),
\end{equation}
so that

$$
\omega(x,e)=p_0(e)+\epsilon\xi(x,e),
$$
and

$$
\bar\xi(x,e):=\xi(x,e)-\mathbb E[\xi(x,e)].
$$
Define also

\begin{equation}
\label{pert}
p_\epsilon(e):=p_0(e)+\epsilon\mathbb E[\xi(0,e)].
\end{equation}
Furthermore, define for $n\ge 0$ and $x,y\in\mathbb Z^d$,
 $p_n(x,y)$ as the probability that a 
random walk with transition kernel $p\in\mathcal P$ jumps from $x$ at time $0$
to site $y$ at time $n$, and the corresponding
potential kernel of the reversed random walk

\begin{equation}
\label{jepsilon}
J_{p^*}(x):=\lim_{n\to\infty}\sum_{k=0}^n\left(p_k(0,-x)-p_k(0,0)\right),
\end{equation}
where here for $p\in\mathcal P$, we define  $p^*$ as the transition
kernel $p^*\in\mathcal P$ defined by $p^*(e):=p(-e)$ for $e\in V$. 
Note that for each $p$ which defines a transient random walk,
the above expression can be written as a difference
of a Green function evaluated at different points. 
On the other hand, in the two-dimensional recurrent case,
(\ref{jepsilon}) is equal to the negative of the potential kernel.

In the main result of this article, we establish
an asymptotic expansion for the invariant measure of
random walks in environments whose law is supported
for a given $p_0\in\mathcal P_0$
in $\Omega_{p_0,\epsilon}$ for $\epsilon$ small enough.
To formulate it, we will
assume the following condition on the local drift.
Given $p_0\in\mathcal P_0$, and $\epsilon>0$ 
we will say that a probability measure $\mathbb P$ defined on
$\Omega$ satisfies the local drift condition {\bf (LD)}
 with bound $\epsilon$ if $\mathbb P(\Omega_{p_0,\epsilon})=1$ and

\begin{equation}
\label{HIP}
\E[d(0,\omega)]\cdot e_1\ge \epsilon.
\end{equation}
Whenever the local drift condition {\bf (LD)} is satisfied,
the random walk satisfies Kalikow's condition \cite{Sa04},
and hence by  Theorem 3.1 of Sznitman
and Zerner \cite{SZ99},
 the environmental process
has a marginal law at fixed time which converges in distribution to an invariant
measure. We will call this invariant measure, the
{\it limiting invariant measure} of the environmental process.
Furthermore, given a measure $\mu$ defined on $\Omega$ and a subset
$B\subset\mathbb Z^d$, we
will call {\it the restriction of $\mu$ to $B$} to
the marginal law of $\mu$ in $\mathcal P^B$.

\medskip

\begin{theo}
\label{theorem1} Let $\eta>0$ and $B$  finite subset of $\mathbb Z^d$.
Then, there is an $\epsilon_0>0$
such that whenever $\epsilon\le \epsilon_0$,
 $p_0\in\mathcal P_0$, and $\mathbb P$ satisfies the
local drift condition {\bf (LD)} [c.f. (\ref{HIP})],
the limiting invariant measure $\mathbb Q$ has
a restriction $\mathbb Q_B$ to  $B$ which 
is absolutely continuous with respect to the restriction
$\mathbb P_B$ to $B$ of $\mathbb P$, with a
Radon-Nikodym derivative admiting $\mathbb P$-a.s. the expansion

\begin{equation}
\label{expansion1}
\frac{d\mathbb Q_B}{d\mathbb P_B}=1+\epsilon\sum_{z\in B}\sum_{e\in V}
\bar\xi(z,e) J_{p^*_\epsilon}(e+z)+ O\left(\epsilon^{2-\eta}\right),
\end{equation}
 where $\left|O\left(\epsilon^{2-\eta}\right)\right|\le c_1\epsilon^{2-\eta}$, for
some constant $c_1=c_1(\eta,\kappa,d,B)$ depending only on $\eta$, $\kappa$, $d$ and $B$.

\end{theo}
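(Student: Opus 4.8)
The plan is to characterize the density $\phi:=d\mathbb{Q}/d\mathbb{P}$ through the stationarity equation of the environmental process and then expand it in $\epsilon$ around the homogeneous walk with the \emph{mean} kernel $p_\epsilon$. Granting the existence of the invariant measure $\mathbb{Q}$ under \textbf{(LD)} (Kalikow's condition and Theorem~3.1 of \cite{SZ99}), I write the environmental transition operator as $(Pf)(\omega)=\sum_{e\in V}\omega(0,e)f(\theta_e\omega)$ and compute its adjoint in $L^2(\mathbb{P})$ via the measure-preserving change of variables $\omega\mapsto\theta_{-e}\omega$, which uses the shift invariance and the i.i.d.\ structure of $\mathbb{P}$. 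The identity $\mathbb{Q}P=\mathbb{Q}$ then turns into the pointwise equation
\[
\phi(\omega)=\sum_{e\in V}\omega(-e,e)\,\phi(\theta_{-e}\omega).
\]
Since the restriction density equals the conditional expectation $d\mathbb{Q}_B/d\mathbb{P}_B=\mathbb{E}[\phi\mid\mathcal{F}_B]$ with $\mathcal{F}_B=\sigma(\omega(x):x\in B)$, it suffices to expand $\mathbb{E}[\phi\mid\mathcal{F}_B]$; the absolute continuity of $\mathbb{Q}_B$ will come out of the convergence of this expansion.

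Substituting $\omega(-e,e)=p_\epsilon(e)+\epsilon\bar\xi(-e,e)$ and setting $\phi=1+\chi$ with $\mathbb{E}[\chi]=0$, and writing $(\tilde P_\epsilon h)(\omega):=\sum_e p_\epsilon(e)h(\theta_{-e}\omega)$ for the environmental operator of the deterministic walk with kernel $p_\epsilon$ together with $(M_\omega h)(\omega):=\sum_e\bar\xi(-e,e)h(\theta_{-e}\omega)$, the equation becomes
\[
(I-\tilde P_\epsilon)\chi=\epsilon\,\bar g+\epsilon\,M_\omega\chi,\qquad \bar g(\omega):=\sum_{e\in V}\bar\xi(-e,e).
\]
Condition \textbf{(LD)} forces the mean walk $p_\epsilon$ to have drift at least $\epsilon$ in direction $e_1$, hence to be transient, so its Green function $G_\epsilon(0,x)=\sum_{k\ge0}p_k^{(\epsilon)}(0,x)$ is finite and $I-\tilde P_\epsilon$ is invertible on the relevant functions through $G_\epsilon$. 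Iterating gives $\chi=\sum_{n\ge1}\epsilon^n(GM_\omega)^{n-1}G\bar g$, with first order term $\psi:=G\bar g$ and remainder $R:=\sum_{n\ge2}\epsilon^n(GM_\omega)^{n-1}G\bar g$.

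To identify the first order term I expand $\psi(\omega)=\sum_{x,e}a(x,e)\bar\xi(x,e)$ and match coefficients in $(I-\tilde P_\epsilon)\psi=\bar g$: for each fixed $e$ the coefficient $a(\cdot,e)$ solves the discrete Poisson equation $a(x,e)-\sum_{e'}p_\epsilon(e')a(x+e',e)=\mathbf 1\{x=-e\}$, whose decaying solution is $a(x,e)=G_\epsilon(0,-e-x)$. Conditioning on $\mathcal{F}_B$, independence leaves only $x\in B$, so $\mathbb{E}[\psi\mid\mathcal{F}_B]=\sum_{z\in B}\sum_{e\in V}G_\epsilon(0,-e-z)\bar\xi(z,e)$. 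Because $\sum_e\bar\xi(z,e)=0$ for every $z$ (as $\sum_e\omega(z,e)=\sum_e p_0(e)=1$), I may subtract the $(z,e)$-independent constant $G_\epsilon(0,0)$ from each coefficient, turning $G_\epsilon(0,-e-z)$ into $G_\epsilon(0,-e-z)-G_\epsilon(0,0)=J_{p_\epsilon^*}(e+z)$; this reproduces exactly the linear term in \eqref{expansion1}.

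The main obstacle is the estimate $|\mathbb{E}[R\mid\mathcal{F}_B]|\le c_1\epsilon^{2-\eta}$. A naive operator-norm Neumann bound is unavailable: $\tilde P_\epsilon$ is a normal contraction whose spectrum is the closure of the Fourier symbols $\sum_e p_\epsilon(e)e^{i\theta\cdot e}$, $\theta\in[0,2\pi)^d$, which accumulates at $1$ as $\theta\to0$, so $(I-\tilde P_\epsilon)^{-1}$ has no spectral gap on mean-zero functions. I will instead control $R$ by explicit quantitative estimates on the potential kernel $J_{p_\epsilon^*}$ and its discrete gradients for the weakly ballistic walk $p_\epsilon$, using the drift $\ge\epsilon$ as an effective mass that cuts the kernel off at spatial scale $1/\epsilon$. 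Conditioning on $\mathcal{F}_B$ forces the surplus fluctuation fields at the intermediate vertices to pair, producing convolution sums of gradients of $J_{p_\epsilon^*}$, for which the relation $\sum_e\bar\xi(z,e)=0$ supplies precisely the discrete-derivative structure that makes them converge. The crux is to show each such sum degenerates at most like $\epsilon^{-\eta}$ as $\epsilon\to0$ (only logarithmically in $d=2$), so that the order-$n$ term is bounded by $(C\epsilon^{1-\eta})^{n}$ up to a finite factor depending on $B$; summing the geometric tail then isolates the leading remainder $O(\epsilon^{2-\eta})$ with $c_1=c_1(\eta,\kappa,d,B)$.
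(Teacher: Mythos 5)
Your proposal has two genuine gaps, one structural and one quantitative. The structural one: your starting point, the pointwise equation $\phi(\omega)=\sum_{e\in V}\omega(-e,e)\,\phi(\theta_{-e}\omega)$, presupposes that the global density $\phi=d\mathbb Q/d\mathbb P$ exists on the full $\sigma$-algebra. That is precisely what is \emph{not} known in general: global absolute continuity of $\mathbb Q$ with respect to $\mathbb P$ is a theorem of Berger, Cohen and Rosenthal only for $d\ge 4$, and is known only on forward half-spaces for $d\ge 2$; this is why Theorem \ref{theorem1} asserts absolute continuity only of the restrictions $\mathbb Q_B$ to finite sets. Your remark that ``the absolute continuity of $\mathbb Q_B$ will come out of the convergence of this expansion'' is circular, because the object being expanded is $\phi$ itself. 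What you have written is, in essence, the paper's Section \ref{formal} (the recursion $g_{i+1}=(R^*)^{-1}A^*g_i$ with $(R^*)^{-1}$ given by the Green function of $p_\epsilon^*$, and the same identification of the first-order coefficient), which the paper explicitly labels a \emph{formal} derivation and does not use as a proof. The paper's actual route avoids ever invoking $\phi$: it represents the limiting invariant measure through annealed quenched-Green-function ratios killed at an independent geometric time $\tau_\delta$ (Propositions 5 and \ref{polinomial}), expands those finite-$\delta$ quantities, and only then lets $\delta\to 1$.

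The quantitative gap is the remainder. You correctly identify that a Neumann bound fails for lack of a spectral gap, but what you offer in its place is a plan, not an argument: the claim that each pairing sum of gradients of $J_{p_\epsilon^*}$ ``degenerates at most like $\epsilon^{-\eta}$'' is exactly the hard content, and nothing in the proposal establishes it. Note the difficulty: under {\bf (LD)} the mean walk has drift only of order $\epsilon$, so the Green function $G_\epsilon$ spreads over scale $1/\epsilon$ and its $\ell^1$ and $\ell^2$ norms blow up as $\epsilon\to 0$; even defining the terms $(GM_\omega)^{n-1}G\bar g$ as random variables, exchanging the conditional expectation $\E[\,\cdot\mid\mathcal F_B]$ with the infinite sums over $\mathbb Z^d$, controlling coinciding-site contributions in the higher-order pairings, and summing the series in $n$ all require the very estimates you postpone. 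The paper gets uniform control by a different mechanism: a multi-site perturbation lemma for the killed quenched Green function (Lemma \ref{gfe}), which yields the error terms $O_1,\dots,O_9$ with explicit constants depending only on $\kappa$, $d$ and $B$; the identification of the coefficient $J_{p_\epsilon^*}(z+e)$ up to $O(\epsilon^{1-\eta})$ via Kalikow's auxiliary environment (Lemma \ref{Sabot}, a variant of Sabot's Lemma 3); and the uniformity in $y$ and $\delta$ of Proposition \ref{expansion}, which is what survives the $\delta\to1$ limit and produces the constant $c_1(\eta,\kappa,d,B)$ in (\ref{expansion1}). Until you can prove your pairing estimates with that uniformity, the proposal does not yield the theorem.
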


\medskip

\noindent Expanding   $J_{p^*_\epsilon}$ it is possible rewrite
in dimensions $d\ge 2$, the expansion (\ref{expansion1}).

\medskip

\begin{corollary}
\label{corollary1}
Let $\eta>0$ and $B$  finite subset of $\mathbb Z^d$.
Then, there is an $\epsilon_0>0$
such that whenever $\epsilon\le \epsilon_0$,
 $p_0\in\mathcal P_0$, and $\mathbb P$ satisfies the
local drift condition {\bf (LD)} [c.f. (\ref{HIP})],
the limiting invariant measure $\mathbb Q$
has a restriction $\mathbb Q_B$ to  $B$ which
is absolutely continuous with respect to the restriction
$\mathbb P_B$ to $B$ of $\mathbb P$, with a
Radon-Nikodym derivative admiting $\mathbb P$-a.s. the expansion

\begin{equation}
\label{expansion2}
\frac{d\mathbb Q_B}{d\mathbb P_B}=1+\epsilon\sum_{z\in B}\sum_{e\in V}
\bar\xi(z,e) J_{p^*_0}(e+z)+ O\left(\epsilon^{2-\eta}\right),
\end{equation}
where
 $\left|O\left(\epsilon^{2-\eta}\right)\right|\le c_2\epsilon^{2-\eta}$, for
some constant $c_2=c_2(\eta,\kappa,d,B)$ depending only on $\eta$, $\kappa$, $d$ and $B$.
\end{corollary}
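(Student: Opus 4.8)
The plan is to deduce the Corollary directly from Theorem~\ref{theorem1} by showing that, inside the order-$\epsilon$ term of the expansion, one may replace the potential kernel $J_{p^*_\epsilon}$ of the reversed drifted kernel by the potential kernel $J_{p^*_0}$ of the reversed unperturbed kernel, at the cost of an error absorbed into the remainder $O(\epsilon^{2-\eta})$. Subtracting \refeq{expansion2} from \refeq{expansion1}, the two right-hand sides differ by
\[
\epsilon\sum_{z\in B}\sum_{e\in V}\bar\xi(z,e)\left(J_{p^*_\epsilon}(e+z)-J_{p^*_0}(e+z)\right).
\]
Since $\mathbb P(\Omega_{p_0,\epsilon})=1$ we have $|\xi(z,e)|\le1$, hence $|\bar\xi(z,e)|\le2$, and the double sum runs over the finite index set $B\times V$; therefore it suffices to prove the deterministic estimate
\[
\left|J_{p^*_\epsilon}(x)-J_{p^*_0}(x)\right|\le c\,\epsilon^{1-\eta}\qquad\text{for every }x\in\{e+z:z\in B,\,e\in V\},
\]
with $c$ depending only on $\eta,\kappa,d,B$. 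Because the relevant arguments range over a fixed finite set, only the behaviour of the two kernels at bounded points is needed.

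To obtain this estimate I would use the Fourier representation of the potential kernel. Writing $\hat q(\theta):=\sum_{e\in V}q(e)e^{i\theta\cdot e}$ for $q\in\mathcal P$, the standard computation (representing the $k$-step probabilities as $\frac{1}{(2\pi)^d}\int\hat{q^*}(\theta)^k e^{-i\theta\cdot x}\,d\theta$ and passing to the limit in \refeq{jepsilon}, justified as in Spitzer's treatment of the potential kernel) gives
\[
J_{q^*}(x)=\frac{1}{(2\pi)^d}\int_{[-\pi,\pi]^d}\frac{e^{-i\theta\cdot x}-1}{1-\hat{q^*}(\theta)}\,d\theta.
\]
Subtracting the representations for $q=p_\epsilon$ and $q=p_0$ yields
\[
J_{p^*_\epsilon}(x)-J_{p^*_0}(x)=\frac{1}{(2\pi)^d}\int_{[-\pi,\pi]^d}\left(e^{-i\theta\cdot x}-1\right)\frac{\hat{p^*_\epsilon}(\theta)-\hat{p^*_0}(\theta)}{\bigl(1-\hat{p^*_\epsilon}(\theta)\bigr)\bigl(1-\hat{p^*_0}(\theta)\bigr)}\,d\theta.
\]
The crucial structural input is that, both kernels being probability measures, $\hat{p^*_\epsilon}(0)=\hat{p^*_0}(0)=1$ and $\sum_e\mathbb E[\xi(0,e)]=0$, so that
\[
\hat{p^*_\epsilon}(\theta)-\hat{p^*_0}(\theta)=\epsilon\sum_{e\in V}\mathbb E[\xi(0,-e)]\bigl(e^{i\theta\cdot e}-1\bigr),
\]
which is $O(\epsilon)$ uniformly and, more importantly, $O(\epsilon|\theta|)$ near the origin.

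I would then split the integral into $\{|\theta|\le\delta\}$ and its complement. Away from the origin, uniform ellipticity of $p_0\in\mathcal P_0$ gives $\mathrm{Re}\bigl(1-\hat{q^*}(\theta)\bigr)=\sum_e q^*(e)(1-\cos(\theta\cdot e))\ge c(\delta)>0$ for both kernels, so that region contributes only $O(\epsilon)$. Near the origin one has $\mathrm{Re}\bigl(1-\hat{q^*}(\theta)\bigr)\gtrsim|\theta|^2$, hence $|1-\hat{p^*_0}(\theta)|\gtrsim|\theta|^2$, while for the drifted kernel the imaginary part carries the drift: $|1-\hat{p^*_\epsilon}(\theta)|\gtrsim\sqrt{(\bar d_\epsilon\cdot\theta)^2+|\theta|^4}$, where $\bar d_\epsilon=\sum_e p^*_\epsilon(e)e=-\mathbb E[d(0,\omega)]$ is the reversed drift. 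Here the local drift condition {\bf (LD)} [c.f.\ \refeq{HIP}] is exactly what guarantees $|\bar d_\epsilon|\gtrsim\epsilon$, supplying a cutoff at scale $|\theta|\sim\epsilon$ that regularizes an otherwise non-integrable singularity. Combining $|e^{-i\theta\cdot x}-1|\lesssim|\theta|$ with the numerator bound $O(\epsilon|\theta|)$ and these denominators, the near-origin integrand is $\lesssim\epsilon/\sqrt{(\bar d_\epsilon\cdot\theta)^2+|\theta|^4}$; integrating in polar coordinates and using that the angular integral of $\log|\cos\psi|$ is finite produces the bound $O(\epsilon\log(1/\epsilon))$ in $d=2$ and $O(\epsilon)$ in $d\ge3$. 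In all cases $O(\epsilon\log(1/\epsilon))\le c\,\epsilon^{1-\eta}$, which is the required estimate.

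The main obstacle is the near-origin analysis in $d=2$ with $p_0$ symmetric (or of vanishing drift), where $J_{p^*_0}$ is the recurrent potential kernel while $J_{p^*_\epsilon}$ is a difference of Green functions of a transient walk whose Green function itself diverges as $\epsilon\to0$; the two quantities nonetheless converge, and the delicate point is extracting the quantitative rate. This forces one to retain the drift term $\bar d_\epsilon\cdot\theta$ in the lower bound for $|1-\hat{p^*_\epsilon}|$ rather than discarding it—dropping it would yield a divergent integral—and to control the $O(|\theta|^3)$ corrections to the linear-in-$\theta$ expansion of the imaginary part, which is routine but must be done carefully to justify $|1-\hat{p^*_\epsilon}(\theta)|\gtrsim\sqrt{(\bar d_\epsilon\cdot\theta)^2+|\theta|^4}$ uniformly for small $\epsilon$. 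Once the kernel estimate is in hand, substitution into Theorem~\ref{theorem1} is immediate and completes the proof.
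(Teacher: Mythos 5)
Your proposal is correct and follows essentially the same route as the paper: both reduce the Corollary to the deterministic kernel estimate $|J_{p^*_\epsilon}(z+e)-J_{p^*_0}(z+e)|=O(\epsilon\log(1/\epsilon))$ in the zero-drift $d=2$ case and $O(\epsilon)$ otherwise (which suffices since $\epsilon\cdot\epsilon\log(1/\epsilon)\le c\,\epsilon^{2-\eta}$), and both prove it by Fourier inversion of the potential kernel, with the local drift condition {\bf (LD)} supplying the cutoff at scale $\epsilon$ that regularizes the singularity at the origin. The only difference is one of implementation: the paper uses the symmetrized representation \refeq{p01}, in which the drift enters as a mass defect of order $\epsilon^2$ in the real denominator $1-2\sum_{j}\sqrt{p_\epsilon(e_j)p_\epsilon(-e_j)}\cos(x_j)$, whereas you keep the complex characteristic function and encode the drift in the lower bound $\left|1-\hat{p}^*_\epsilon(\theta)\right|\gtrsim\sqrt{(\bar d_\epsilon\cdot\theta)^2+|\theta|^4}$ --- two equivalent ways of carrying out the same singularity analysis.
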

\medskip

From the point of view of its explicitness, a startling consequence of Corollary \ref{corollary1} is stated in Corollary \ref{corollary2}
of section \ref{formal}, where due to the fact that the potential kernel
 of a simple
symmetric random walk in dimension $d=2$ can be recursively computed, we can obtain a numerical expression
up to first order for the limiting invariant measure.
Furthermore, the Radon-Nykodim derivative  (\ref{expansion1}) plays
an important role in local limit theorems (see for example
Theorem 1.11 of \cite{BCR14} valid for $d\ge 4$).

On the other hand, by the fact that the marginal law of the environmental
process converges to the limiting invariant measure,
 and the fact that

$$
X_n-\sum_{i=0}^{n-1}d(0,\bar\omega_i)\qquad n\ge 0,
$$
is a $P_0$-martingale, we can recover
through Theorem \ref{theorem1}
 Sabot's expansion for the velocity \cite{Sa04},
under the local drift condition {\bf (LD)},

\begin{equation}
\label{vex}
v=\int d(0,\omega)d\mathbb Q=d_0+\epsilon d_1+\epsilon^2 d_2^\epsilon+O(\epsilon^{3-\eta}),
\end{equation}
where $d_0:=\sum_{e\in V}ep_0(e)$, $d_1:=\sum_{e\in V}e \mathbb E[\xi(0,e)]$
and

$$
d_2^\epsilon:=\sum_{e\in V} \sum_{e'\in V} C_{e,e'} J_{p_\epsilon^*}(e),
$$
where $C_{e,e'}:=Cov(\xi(0,e),\xi(0,e'))$.

\medskip

 The absolute continuity of the invariant measure $\mathbb Q$
of Theorem \ref{theorem1} with respect to the law of the environment
restricted to finite sets follows from the proof of Theorem 3.1 of Sznitman and Zerner
in \cite{SZ99}. In dimensions $d\ge 4$, since Kalikow's condition
is satisfied, by a result
of Berger, Cohen and Rosenthal \cite{BCR14} (see also
a previous result of Bolthausen and Sznitman \cite{BS02} valid
at low disorder), we also know that $\mathbb Q$ is absolutely
continuous with respect to $\mathbb P$, and in dimensions $d\ge 2$, 
by \cite{RA03}, we
know that it is absolutely continuous with respect to $\mathbb P$
in every forward half space perpendicular to $e_1$.

\medskip

Random perturbations of 
random walks have been already considered (see \cite{G12}
perturbations leading to the Einstein relation
and \cite{BK91,Sz04,BSZ03, Ba14}
for perturbations of the simple symmetric random walk)
Nevertheless, Theorem \ref{theorem1} 
is one of the first results for the model of random walks
in random environment in the non-reversible context giving explicit quantitative
information about the invariant measure of the environmental process. 

The proof of Theorem \ref{theorem1} is based
on adequate expansions of the Green function
of the random walk within the spirit of \cite{Sa04}.
 Nevertheless, a key
ingredient that we have to incorporate here, is to obtain an expression for the
limiting invariant measure in terms accumulation points of a Ces\`aro type average performed
at a random time with a geometric distribution. This is the content
of Proposition 5 in section \ref{cesaro}. Furthermore, it is necessary
to obtain careful  expansions of Green functions of random
walks perturbed at multiple points.

In the next section of this article, we will derive a more explicit
version of Corollary \ref{corollary1} for the case of perturbations
of the simple symmetric random walk in dimension $d=2$. Then, in section
\ref{formal}, we will give a heuristic explanation of the expansions
(\ref{expansion1}) and (\ref{expansion2}) of Theorem \ref{theorem1} and
Corollary \ref{corollary1}. In section \ref{cesaro}, we will derive
an expression for the limiting invariant measure in terms of 
Ces\`aro averages of the marginal laws of the environmental process
up to a geometric stopping time. In section \ref{green}, we will
show how to perturb at a finite number of sites the Green function
of the random walk. The results of section \ref{green} will
be used to expand a typical term of the expression giving
the limiting measure in Proposition 9 of section \ref{local}.
Using this expansion, Theorem \ref{theorem1} is proved in
section \ref{proof}. Finally, Corollary \ref{corollary1} will
be proved in section \ref{end}.

\medskip

\section{Random perturbations of the simple symmetric
random walk in $d=2$}
\label{consequence}

Here we will derive explicit expression for some
marginal laws of the invariant measure through
Theorem \ref{theorem1} and Corollary \ref{corollary1} for
the the case in which the perturbations
are done on a simple symmetric random walk, so that $p_0(e)=\frac{1}{2d}$ for
all $e\in V$. To simplify notation, we will use the 
notation $J:=J_{p_0}$ 
for this choice of $p_0$. 

Firstly, let us note that when $d=2$,  the explicit values

\begin{equation}
\nonumber
J(z)=
\begin{cases}
0 &\rm{for}\ z=(0,0)\\
-1&\rm{for}\  z=(0,\pm 1), (\pm 1, 0)\\
-\frac{4}{\pi} &\rm{for}\ z=(1,\pm 1), (\pm 1, 1)\\
\frac{8}{\pi}-4 &\rm{for}\ z=(0,\pm 2), (\pm 2, 0)\\
\end{cases}
\end{equation}
can be derived (see McCrea and Whipple \cite{McW40} or Spitzer \cite{Sp64}).
We hence obtain the following corollary from Corollary
\ref{corollary1}. Here we define $z_0:=(0,0)$ and $z_1:=(0,1)$.

\medskip

\begin{corollary}
\label{corollary2} Let $p_0$ be the jump probabilities
of a simple symmetric random walk,  $\eta>0$ and $d=2$.
Then, there is an $\epsilon_0>0$
such that whenever $\epsilon\le \epsilon_0$,
 $p_0\in\mathcal P_0$, and $\mathbb P$ satisfies the
local drift condition {\bf (LD)} [c.f. (\ref{HIP})],
the Radon-Nikodym derivative of the
restriction $\mathbb Q_{z_0,z_1}$ to $\{z_0,z_1\}$  of the limiting
invariant measure $\mathbb Q$ with respect to the restriction
$\mathbb P_{z_0,z_1}$ of $\mathbb P$ to $\{z_0,z_1\}$, admits $\mathbb P$-a.s. the following expansion

\begin{equation}
\label{ec1}
\frac{d\mathbb Q_{z_0,z_1}}{d\mathbb P_{z_0,z_1}}=1-
\frac{4}{\pi}\left(\bar\xi(z_1,e_1)+\bar\xi(z_1,-e_1)\right)\epsilon
+\left(\frac{8}{\pi}-4\right)\bar\xi(z_1,e_2)\epsilon
+O\left(\epsilon^{2-\eta}\right).
\end{equation}
In particular, we have that $\mathbb P$-a.s.

\begin{equation}
\label{ec2}
\frac{d\mathbb Q_{z_i}}{d\mathbb P_{z_i}}=
\begin{cases}
1+ O\left(\epsilon^{2-\eta}\right)\quad &{\rm if}\quad  i=0\\
1-
\frac{4}{\pi}\left(\bar\xi(z_1,e_1)+\bar\xi(z_1,-e_1)\right)\epsilon
+\left(\frac{8}{\pi}-4\right)\bar\xi(z_1,e_2)\epsilon
+O\left(\epsilon^{2-\eta}\right) \quad &{\rm if}\quad  i=1.
\end{cases}
\end{equation}
In both (\ref{ec1}) and (\ref{ec2})
 we have that $\left|O\left(\epsilon^{2-\eta}\right)\right|\le c'_2\epsilon^{2-\eta}$, for
some constant $c'_2=c'_2(\eta)$ depending only on $\eta$.
\end{corollary}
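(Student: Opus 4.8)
The plan is to specialize the general expansion~\refeq{expansion2} of Corollary~\ref{corollary1} to the case $p_0(e)=\frac{1}{2d}$ with $d=2$ and the two-point set $B=\{z_0,z_1\}$, and then simply substitute the tabulated values of the potential kernel $J=J_{p_0}$. Since $p_0$ is symmetric, one has $p_0^*=p_0$, hence $J_{p_0^*}=J_{p_0}=J$; this is the first observation I would record, as it lets me replace $J_{p_0^*}$ by the explicitly computed $J$ throughout.

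Next I would write out the double sum in~\refeq{expansion2} explicitly. With $B=\{z_0,z_1\}$ and $z_0=(0,0)$, $z_1=(0,1)$, the first-order term is
\begin{equation}
\nonumber
\epsilon\sum_{z\in\{z_0,z_1\}}\sum_{e\in V}\bar\xi(z,e)\,J(e+z).
\end{equation}
Here $V=\{\pm e_1,\pm e_2\}$ in $d=2$. I would evaluate $e+z$ for each of the eight pairs $(z,e)$ and read off $J(e+z)$ from the table. For $z=z_0=(0,0)$ the arguments $e+z_0$ are exactly the four nearest neighbors $(\pm1,0),(0,\pm1)$, at each of which $J=-1$; the corresponding contribution is $-\epsilon\sum_{e\in V}\bar\xi(z_0,e)$, and since $\sum_{e\in V}\xi(z_0,e)=\frac{1}{\epsilon}\sum_{e\in V}(\omega(z_0,e)-p_0(e))=0$ (both $\omega(z_0,\cdot)$ and $p_0$ are probability vectors), one has $\sum_{e\in V}\bar\xi(z_0,e)=0$ as well, so the entire $z_0$-contribution vanishes. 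This is the key simplification that explains why only $z_1$-terms survive and why the $i=0$ case of~\refeq{ec2} reduces to $1+O(\epsilon^{2-\eta})$.

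For $z=z_1=(0,1)$ I would tabulate the four shifted arguments: $e_1+z_1=(1,1)$, $-e_1+z_1=(-1,1)$, $e_2+z_1=(0,2)$, and $-e_2+z_1=(0,0)$. From the table $J(1,1)=J(-1,1)=-\frac{4}{\pi}$, $J(0,2)=\frac{8}{\pi}-4$, and $J(0,0)=0$. Substituting gives
\begin{equation}
\nonumber
\epsilon\Big(-\tfrac{4}{\pi}\bar\xi(z_1,e_1)-\tfrac{4}{\pi}\bar\xi(z_1,-e_1)+\big(\tfrac{8}{\pi}-4\big)\bar\xi(z_1,e_2)+0\cdot\bar\xi(z_1,-e_2)\Big),
\end{equation}
which is precisely the first-order term of~\refeq{ec1}. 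This yields~\refeq{ec1} directly. To obtain the marginal statements~\refeq{ec2}, I would restrict further: the restriction $\mathbb Q_{z_i}$ to the single site $z_i$ is obtained by applying Corollary~\ref{corollary1} with $B=\{z_i\}$, or equivalently by noting that the $z_0$- and $z_1$-contributions in~\refeq{ec1} are separated, so the single-site derivatives are read off by keeping only the relevant block. The constant $c_2'$ depending only on $\eta$ then comes from $c_2(\eta,\kappa,d,B)$ of Corollary~\ref{corollary1} with $d=2$ and the specific finite sets $B$ fixed, absorbing the now-fixed dependence on $\kappa,d,B$.

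The argument is essentially bookkeeping, so I do not expect a genuine obstacle; the only point requiring care is the vanishing of the $z_0$-sum via $\sum_{e\in V}\bar\xi(z_0,e)=0$, and the correct enumeration of the shifted lattice points $e+z_1$ against the symmetry classes in the table for $J$ (in particular noticing $-e_2+z_1=z_0$ lands on the origin where $J=0$, and that both horizontal neighbors of $z_1$ fall in the class $J=-4/\pi$).
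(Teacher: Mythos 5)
Your proposal is correct and follows exactly the route the paper intends: the paper derives Corollary \ref{corollary2} by specializing Corollary \ref{corollary1} to $B=\{z_0,z_1\}$ (resp.\ $B=\{z_i\}$) and substituting the tabulated values of $J=J_{p_0^*}=J_{p_0}$, which is precisely your computation, including the vanishing of the $z_0$-contribution via $\sum_{e\in V}\bar\xi(z_0,e)=0$ and the identification $-e_2+z_1=(0,0)$ where $J=0$.
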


\medskip

\noindent Similar estimates can be obtained for the
marginal law of the limiting invariant measure $\mathbb Q$
restricted to other finite subsets of $\mathbb Z^2$ using
the recursive method presented in \cite{McW40} (see also \cite{Sp64}) to compute
  $J$.

\medskip

\section{Formal derivation of the invariant measure perturbative expansion}
\label{formal}
Here we will show how one can formally derive
the expansion (\ref{expansion1}) of Theorem \ref{theorem1}.
Given any $p\in\mathbb P$ defining a non vanishing drift
$\sum_{e\in V}ep(e)\ne 0$, we define for each $x,y\in\mathbb Z^d$
the Green function $G^p(x,y)$ as the expectation of the number
of visits to site $y$ of the random walk starting from site $x$.

Let us write the transition kernel of the
environmental process as

\begin{equation}
\label{rid}
R=R_0+\epsilon A,
\end{equation}
where for $f:\Omega\to\mathbb R$ we define

$$
R_0f(\omega):=\sum_{e\in V}p_\epsilon(e) f(t_e\omega),
$$
and

$$
Af(\omega):=\sum_{e\in V} \bar\xi(0,e)f(t_e\omega).
$$
The invariant measure $\mathbb Q$ satisfies the equality

\begin{equation}
\label{rf}
\int Rfd\mathbb Q=0,
\end{equation}
for every bounded and continuous function $f:\Omega\to\mathbb R$.
If we assume that the Radon-Nikodym derivative $g:=\frac{d\mathbb Q}{d\mathbb P}$
exists and that it has an analytic expansion

$$
g=\sum_{i=0}^\infty \epsilon^i g_i,
$$
with $g_0=1$, substituting this expansion and $R$ in (\ref{rid}) into
(\ref{rf}), and matching powers, we conclude that for each $i\ge 0$ one has that

\begin{equation}
\label{rec}
g_{i+1}=(R^*)^{-1}A^* g_i,
\end{equation}
where for any linear operator $L$, $L^*$ denotes its adjoint with
respect to the measure $\mathbb P$. Now, note that

$$
(R^*)^{-1}f(\omega)=\sum_{z\in \mathbb Z} G^{p_\epsilon^*}(0,z)f(t_z\omega)
=\sum_{z\in \mathbb Z} G^{p_\epsilon}(z,0)f(t_z\omega).
$$
where $p^*_\epsilon(e):=p_\epsilon(e)$ define the jump probabilities
of the time reversal of the random walk with jump probabilities $p$.
Furthermore,

$$
A^*f(\omega):=\sum_{e\in V}\bar\xi(-e,e)f(t_e\omega).
$$
From the recursion (\ref{rec}) it follows that

$$
g_1(\omega)=\sum_{z\in\mathbb Z^d, e\in V}G^{p_\epsilon^*}(0,z)\bar\xi(z-e,e)=
\sum_{z\in\mathbb Z^d, e\in V}\bar\xi(z,e) G^{p_\epsilon^*}(0,z+e).
$$
Expanding $G^{p^*_\epsilon}$ in $\epsilon$ we formally recover
 the first order term of the expansion (\ref{expansion1}).
\medskip

\section{Invariant measure as a geometric Ces\`aro limit}
\label{cesaro}
In analogy with the fact that limit points of Ces\`aro
averages of the environmental process give rise to invariant
measures, here we will show that when such an average is done
according to a geometric stopping time, its limit points
are still invariant measures.
 
For each $\delta>0$, let us consider
the Green function of the random walk
before an independent stopping time
$\tau_\delta$ with geometric distribution
of parameter $1-\delta$, defined
for $x,y\in\mathbb Z^d$ as

$$
g_\delta^\omega(x,y):=E'_{x,\omega}\left[\sum_{n=0}^{\tau_\delta-1}1_y(X_n)\right],
$$
where the expectation $E'_{x,\omega}$ is taken both over the
random walk and over the
random variable $\tau_\delta$.
Define now the probability measure
$\mu_{\delta}$ on $\Omega$ as the unique probability
measure such that for every continuous and bounded
function $f:\Omega\to\mathbb R$ one has that

\begin{equation}
\label{nud}
\int fd\mu_{\delta}=\frac{\sum_{x\in\mathbb Z^d}\mathbb E\left[g_\delta^\omega
(0,x)f(\theta_x\omega)\right]}{
\sum_{x\in\mathbb Z^d}\mathbb E\left[g_\delta^\omega(0,x)\right]}.
\end{equation}
For the following proposition, we do not require
the environment of the random walk to be elliptic,
nor any other assumption on the environment.

\medskip

\begin{proposition} Consider a random walk in random
environment. Then, each accumulation point of the set
of measures $\{\mu_{\delta}:\delta>0\}$  [c.f. (\ref{nud})] as $\delta$ goes to $1$
is an invariant measure of the environmental
process.
\end{proposition}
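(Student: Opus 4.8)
The plan is to rewrite the numerator and denominator defining $\mu_\delta$ in terms of the resolvent of the transition operator of the environmental process, and then exploit an elementary telescoping identity for that resolvent to show that $\int(Rf-f)\,d\mu_\delta\to 0$ as $\delta\to1$ for every bounded continuous $f$; passing to the limit along a convergent subsequence will then yield the invariance equation $\int(Rf-f)\,d\mathbb{Q}=0$. Here $R$ denotes the transition operator of the environmental process, $Rf(\omega)=\sum_{e\in V}\omega(0,e)f(\theta_e\omega)=E_{0,\omega}[f(\bar\omega_1)]$, so that $R^n f(\omega)=E_{0,\omega}[f(\bar\omega_n)]$.

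First I would record that, since $\tau_\delta$ is an independent geometric time with $\P(\tau_\delta>n)=\delta^n$ (up to an overall constant that cancels in the normalisation of $\mu_\delta$), the Green function decomposes as $g_\delta^\omega(0,x)=\sum_{n\ge 0}\delta^n p_n^\omega(0,x)$, where $p_n^\omega$ is the quenched $n$-step kernel. Using $\sum_{x}p_n^\omega(0,x)f(\theta_x\omega)=E_{0,\omega}[f(\bar\omega_n)]=R^n f(\omega)$ together with Tonelli (the terms are nonnegative or bounded and $\sum_n\delta^n<\infty$), the numerator in (\ref{nud}) becomes
\[
N_\delta(f):=\sum_{x}\E\big[g_\delta^\omega(0,x)f(\theta_x\omega)\big]=\int(I-\delta R)^{-1}f\,d\P,\qquad (I-\delta R)^{-1}=\sum_{n\ge 0}\delta^n R^n.
\]
In particular $R^n 1=1$ gives $N_\delta(1)=\sum_{n\ge0}\delta^n=1/(1-\delta)$, so that $\int f\,d\mu_\delta=(1-\delta)N_\delta(f)$.

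The heart of the argument is the telescoping identity $(I-\delta R)^{-1}(Rf-f)=\tfrac{1-\delta}{\delta}(I-\delta R)^{-1}f-\tfrac1\delta f$, which follows by shifting the index in $\sum_n\delta^n R^{n+1}f$ and using that the resolvent commutes with $R$. Integrating against $\P$, recalling $N_\delta(1)=1/(1-\delta)$, and normalising yields the key relation
\[
\int(Rf-f)\,d\mu_\delta=\frac{1-\delta}{\delta}\left(\int f\,d\mu_\delta-\int f\,d\P\right),
\]
whose right-hand side is bounded in absolute value by $\tfrac{1-\delta}{\delta}\,2\|f\|_\infty$ and hence tends to $0$ as $\delta\to1$. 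Thus the ``boundary'' contributions produced by inserting one extra step of $R$ are exactly of order $1-\delta$, the same order by which the normalising mass $N_\delta(1)$ diverges, and they cancel in the limit.

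Finally I would pass to the limit. Since $\Omega=\mathcal P^{\Z^d}$ is compact, the family $\{\mu_\delta\}$ admits accumulation points, and the environmental process is Feller: $Rf$ is bounded and continuous whenever $f$ is, because $\omega\mapsto\omega(0,e)$ and the shifts $\theta_e$ are continuous. Hence if $\mu_{\delta_k}\to\mathbb{Q}$ weakly along some $\delta_k\to1$, then $\int Rf\,d\mu_{\delta_k}\to\int Rf\,d\mathbb{Q}$ and $\int f\,d\mu_{\delta_k}\to\int f\,d\mathbb{Q}$, so the key relation forces $\int(Rf-f)\,d\mathbb{Q}=0$ for every bounded continuous $f$, which is precisely the invariance of $\mathbb{Q}$ for the environmental process. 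I expect the only genuinely delicate points to be bookkeeping ones: justifying the interchange of $\sum_x$, $\sum_n$ and $\E$ in identifying $N_\delta$ with the resolvent, and confirming that the divergence of $N_\delta(1)$ is compensated exactly, so that the remainder is $O(1-\delta)$ rather than $O(1)$; the Feller continuity needed to pass to the weak limit is immediate in this setting.
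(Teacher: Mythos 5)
Your proof is correct, and it takes a genuinely different route from the paper's, even though both rest on the same underlying cancellation. The paper never expands the Green function into a power series: it invokes the path-wise identity (\ref{identidad}) (quoted from Proposition 2 of \cite{Sa04}) to write $\int Rf\,d\mu_\delta$ as the normalized annealed expectation of $\sum_{m=1}^{\tau_\delta-1}Rf(\theta_{X_m}\omega)$, uses the quenched Markov property to replace $Rf(\theta_{X_m}\omega)$ by $R^{m+1}f(\omega)$, and then shifts the index inside the random sum; the error then consists of the two boundary terms $\E E[R^{\tau_\delta}f]/E[\tau_\delta]$ and $\E[Rf]/E[\tau_\delta]$, which vanish because $f$ is bounded while $E[\tau_\delta]\to\infty$. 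You instead integrate out the geometric time first, identify the unnormalized functional in (\ref{nud}) with the resolvent, $N_\delta(f)=\int(I-\delta R)^{-1}f\,d\P$, and carry out the same telescoping at the operator level, obtaining the exact identity $\int(Rf-f)\,d\mu_\delta=\frac{1-\delta}{\delta}\bigl(\int f\,d\mu_\delta-\int f\,d\P\bigr)$. What your packaging buys: it is self-contained (no need to import Sabot's identity, whose proof the paper does not reproduce), and it yields a quantitative $O(1-\delta)$ bound rather than a bare limit statement. What the paper's packaging buys: the path-wise representation keeps the probabilistic meaning of $\mu_\delta$ as a Ces\`aro-type average along the walk, which is the form reused in the proof of Proposition \ref{polinomial}. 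Your closing points --- compactness of $\Omega$ guaranteeing that accumulation points exist, and the Feller property of $R$ needed to pass $\int Rf\,d\mu_{\delta_k}\to\int Rf\,d\mathbb{Q}$ under weak convergence --- are correct and are exactly the (unstated) justifications the paper's final limiting step also requires; spelling them out is a small gain in rigor.
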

\begin{proof}
Note that for each $\delta>0$, as in the proof
of Proposition 2 of Sabot \cite{Sa04},
one can prove that the following identity is satisfied

\begin{equation}
\label{identidad}
\frac{E'_{0}\Bigl[\sum_{k=1}^{\tau_{\delta}-1} f \left(t_{X_{k}}\omega \right) \Bigr]}{E[\tau_{\delta}]} =\frac{\sum_{y \in \Z^d}\E \Bigl[g^{\omega}_{\delta}(0,y)f 
\left(\theta_y \omega\right)\Bigr]}{\sum_{y \in \Z^d}\E \Bigl[g^{\omega}_{\delta}(0,y)\Bigr]},
\end{equation}
where the expectation $E'_0$ denotes taking the annealed expectation
and also the expectation $E$ over $\tau_\delta$.
Let $\mu$ be a limit point of $\{\mu_\delta:\delta>0\}$.
Then,  there exists a sequence $\{\delta_k:k\ge 1\}$ such that 
$\lim_{k\to\infty}\delta_k =1$ and such that $\lim_{k\to\infty}\mu_k:=
\lim_{k\to\infty}\mu_{\delta_k}=\mu$
weakly. 
For $h:\Omega\to\mathbb R$, define
the transition kernel as
\begin{equation}
\label{identidad-2}
Rh(\omega)=\sum_{|e|=1}\omega(0,e)h \left(\theta_{e}\omega\right).
\end{equation}
Using the Markov property of the quenched random walk, one can derive that $$E_{0, \omega} [Rf(\theta_{X_m}\omega)]= R^{m+1}f(\omega), \quad \forall m \in \N.$$ 
Hence, by (\ref{identidad}) and (\ref{identidad-2}) we see that
\begin{eqnarray*}
\int Rf\, d\mu_k &=& \frac{E'_{0}\Bigl[\sum_{m=1}^{\tau_{\delta_k}-1} Rf \left(\theta_{X_{m}}\omega \right) \Bigr]}{E[\tau_{\delta_k}]}=
\frac{\E E \left[\sum_{m=1}^{\tau_{\delta_k}-1}R^{m+1}f(\omega)\right]}{E[\tau_{\delta_k}]} \\
&=&\frac{\E E \left[\sum_{m=1}^{\tau_{\delta_k}-1}R^{m}f(\omega)\right]}{E[\tau_{\delta_k}]}+\frac{1}{E[\tau_{\delta_k}]}\cdot\E E \left[R^{\tau_{\delta_k}}f\right]-\frac{1}{E[\tau_{\delta_k}]}\cdot \E  [Rf]\\
&=&\int f \, d \mu_k+\frac{1}{E[\tau_{\delta_k}]}\cdot\E E \left[R^{\tau_{\delta_k}}f\right]-\frac{1}{E[\tau_{\delta_k}]}\cdot \E  [Rf].
\end{eqnarray*}
Taking the limit when $k\to\infty$
and using the fact that the last two terms tend to zero as $k \to \infty$ by the boundeness of $f$ and the fact that $\lim_{k \to \infty}E\left[\tau_{\delta_k}\right]=\infty$, we conclude that

$$
\int Rfd\mu=\int fd\mu.
$$


\end{proof}

\medskip

To state the next proposition, we recall some of the so called {\it ballisticity conditions},
which have been important in the study of random walks in random environments with
non vanishing velocity. Given $\gamma\in (0,1)$ and $l\in\mathbb S^d$, we say
that condition $(T)_\gamma$ (see \cite{Sz02}) in direction $l$ is satisfied, if there exists a
neighborhood $V$ of $l$ in $\mathbb S^d$ such that for every $l'\in V$ one has that

$$
\limsup_{L\to\infty}\frac{1}{L^\gamma}\log P_0\left(X_{T_{U_{L,l'}}}\cdot l'<0\right)<0,
$$
where $U_{L,l}$ is a slab defined by

$$
U_{L,l}:=\left\{x\in\mathbb Z^d:-L\le x\cdot l\le L\right\}.
$$
We now say that condition $(T')$ in direction $l$ is satisfied if $(T)_\gamma$ in 
direction $l$ is satisfied for every $\gamma\in (0,1)$. On the other hand,
we say that the polynomial condition $(P)_M$ in direction $l$ is satisfied 
(see \cite{BDR14}) if
for all $L\ge c_0$, where

$$
c_0=2^{3(d-1)}\land\exp\left\{2\left(\ln 90+\sum_{j=1}^\infty \frac{\ln j}{2^j}\right)\right\},
$$
one has that

$$
P_0(X_{T_{B_L}}\cdot l<L)\le\frac{1}{L^M},
$$
where

$$
B_L:=\left\{x\in\mathbb Z^d:-\frac{L}{2}\le x\cdot l\le L, |\pi_l x|_\infty\le 25L^3\right\},
$$
and $\pi_l x$ is the orthogonal projection of $x$ on the subspace perpendicular to $l$.

\medskip

\begin{proposition} 
\label{polinomial}
Consider a random walk in a uniformly
elliptic random
environment satisfying the polynomial condition $(P)_M$ for $M\ge 15d+5$.
 Then,
$\mu:=\lim_{\delta\to 1}\mu_\delta$ exists and is
an invariant measure for the environmental process.
Furthermore, the law of the environmental process
at time $n$, converges in distribution to
$\mu$ as $n\to\infty$.
\end{proposition}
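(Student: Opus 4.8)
The plan is to reduce the whole statement to the regeneration structure available under Sznitman's condition $(T')$, and then to translate the geometric Ces\`aro object $\mu_\delta$ into an Abel average to which classical summability theorems apply. First I would invoke the effective ballisticity equivalence of Berger, Drewitz and Ram\'irez \cite{BDR14}: under uniform ellipticity, the polynomial condition $(P)_M$ with $M\ge 15d+5$ implies $(T')$. Consequently the walk admits Sznitman's regeneration times $0<\tau_1<\tau_2<\cdots$ along the relevant direction, the cycles $(\tau_{k+1}-\tau_k,\,(\bar\omega_n)_{\tau_k\le n<\tau_{k+1}})$ are i.i.d.\ for $k\ge 1$, and $(T')$ guarantees finite annealed moments $E_0[(\tau_2-\tau_1)^p]<\infty$ for every $p<\infty$. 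In particular the inter-regeneration time has finite mean, and by uniform ellipticity its law is aperiodic (has span one).

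The second step is to express $\mu_\delta$ as an Abel average of the one-time marginals of the environmental process. Writing $a_k:=E_0[f(\bar\omega_k)]$ for a bounded continuous $f$ (with $E_0$ the annealed expectation), the independence of the geometric time $\tau_\delta$ from the walk, combined with the identity (\ref{identidad}), gives after a short computation
\[
\int f\,d\mu_\delta=(1-\delta)\sum_{k\ge 1}\delta^{k}\,a_k ,
\]
which, up to the vanishing total-mass defect $1-\delta$, is exactly the Abel mean of the sequence $\{a_k\}$.

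The third step identifies the limit. The renewal--reward theorem applied to the regeneration cycles yields the Ces\`aro convergence $\frac1n\sum_{k=1}^n a_k\to \ell$, where $\ell:=E_0[\sum_{k=\tau_1}^{\tau_2-1}f(\bar\omega_k)]/E_0[\tau_2-\tau_1]$. Since Ces\`aro summability implies Abel summability to the same value, the displayed average converges to $\ell$ as $\delta\to 1$; hence $\mu:=\lim_{\delta\to 1}\mu_\delta$ exists with $\int f\,d\mu=\ell$, and by the preceding Proposition it is an invariant measure for the environmental process. For the final assertion I would upgrade the Ces\`aro convergence of $\{a_k\}$ to genuine convergence $a_n\to\ell$: regarding $\{\bar\omega_n\}$ as a regenerative process whose inter-regeneration times are aperiodic with finite mean, the key renewal theorem yields $E_0[f(\bar\omega_n)]\to\ell=\int f\,d\mu$, i.e.\ the marginal law at time $n$ converges weakly to $\mu$.

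The main obstacle is precisely this last upgrade from Ces\`aro to pointwise convergence of $a_n$. Existence and invariance of $\mu$ require only the soft Abelian comparison above, but the distributional convergence of $\bar\omega_n$ needs the key renewal theorem, and therefore a careful verification that the inter-regeneration times are genuinely aperiodic and that the cycle reward $\sum_{k=\tau_1}^{\tau_2-1}f(\bar\omega_k)$ is uniformly integrable (so that truncated and tail contributions can be controlled separately). Securing the high moments of $\tau_2-\tau_1$ from $(T')$ — which is what makes these uniform-integrability estimates go through, and which is the reason the threshold $M\ge 15d+5$ enters — is the quantitative heart of the argument.
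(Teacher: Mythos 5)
Your overall architecture (invoking \cite{BDR14} to get $(T')$, then working with the regeneration structure and summability theory) is reasonable, but there is a genuine gap at its foundation: the claim that the cycles $(\tau_{k+1}-\tau_k,(\bar\omega_n)_{\tau_k\le n<\tau_{k+1}})$ are i.i.d.\ is false. The environmental process $\bar\omega_n=\theta_{X_n}\omega$ is the \emph{entire} environment recentered at $X_n$, so distinct cycles share almost all environment coordinates (for instance $\bar\omega_{\tau_1}$ determines $\bar\omega_{\tau_2}$ up to a random shift). What the Sznitman--Zerner renewal theorem actually gives is independence of the path increments together with the environment restricted to the disjoint slabs between successive regeneration levels. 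Hence, even for a local $f$ depending on a finite set $A$, the cycle rewards $R_k=\sum_{n=\tau_k}^{\tau_{k+1}-1}f(\bar\omega_n)$ also involve environment sites within distance $\mathrm{diam}(A)$ of the $k$-th slab, i.e.\ sites belonging to neighbouring slabs; the sequence $(R_k)$ is only stationary with finite-range dependence, not i.i.d. For your renewal--reward (Ces\`aro) step this is repairable, since stationary $m$-dependent sequences obey the law of large numbers (though the limit is then a stationary-cycle expectation, not literally $E_0[\sum_{k=\tau_1}^{\tau_2-1}f(\bar\omega_k)]/E_0[\tau_2-\tau_1]$). But your last step --- the key renewal theorem giving $a_n\to\ell$ --- is exactly where the i.i.d.-cycle hypothesis enters the classical theorem, and it does not apply as invoked. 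Repairing it requires either a key renewal/coupling theorem for one-dependent (wide-sense) regenerative processes together with the slab bookkeeping, or the coupling argument of \cite{SZ99}; in other words, this step amounts to re-proving Theorem 3.1 of \cite{SZ99}. The paper sidesteps all of this by citing that theorem (observing that its proof goes through when Kalikow's condition is replaced by $(T')$), which settles the final assertion of the proposition at once and also supplies the convergence $a_n\to\int f\,d\mu$ as an input.

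Two further remarks. First, the part of your argument that genuinely differs from the paper --- the identity $\int f\,d\mu_\delta=(1-\delta)\sum_{k\ge1}\delta^k a_k$ plus the Abelian theorem (Ces\`aro convergence implies Abel convergence) --- is correct and is arguably cleaner than the paper's route, which instead factors $\frac{1}{E[\tau_\delta]}E_0\bigl[\sum_{m<\tau_\delta}f(\bar\omega_m)\bigr]$ as $\frac{\tau_\delta}{E[\tau_\delta]}\cdot\frac{1}{\tau_\delta}E_0[\cdots]$ and uses the distributional convergence $\tau_\delta/E[\tau_\delta]\to\mathrm{Exp}(1)$ together with uniform integrability. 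Once $a_n\to\int f\,d\mu$ is taken from \cite{SZ99}, your Abel argument immediately yields $\mu_\delta\to\mu$ (weak convergence then follows from density of local functions in $C(\Omega)$, $\Omega$ being compact), so a hybrid of the paper's citation and your summability step would be a perfectly valid, and shorter, proof. Second, the threshold $M\ge 15d+5$ enters only as the hypothesis of Theorem 1 of \cite{BDR14} guaranteeing $(P)_M\Rightarrow(T')$; it is not tied to uniform-integrability or high-moment requirements --- finite first moment of $\tau_2-\tau_1$ and aperiodicity (which you correctly deduce from uniform ellipticity) are all that any renewal-theoretic argument would need.
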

\begin{proof} Let us first note that by Theorem 1 of \cite{BDR14}, the polynomial
condition $(P)_M$ with $M\ge 15d+5$  implies condition $(T')$ of \cite{Sz02}.
On the other hand, Theorem 3.1 of \cite{SZ99}, which is formulated under the assumption
that Kalikow's condition is satisfied, is still valid if Kalikow's condition is replaced
by condition $(T')$. Therefore, since $\lim_{\delta\to 1}\tau_\delta=1$ in probability, by Theorem 3.1 of \cite{SZ99}, 
that there exists an invariant measure $\mu$ of the
environmental process such that in probability

$$
\lim_{\delta\to 1}\frac{1}{\tau_\delta}E_0\left[
\sum_{m=1}^{\tau_\delta-1}f(t_{X_k}\omega )\right]
=\int fd\mu.
$$
Since $\tau_\delta/E[\tau_\delta]$ converges in distribution to an
exponential random variable $S$ of parameter $1$, it follows that in distribution

$$
\lim_{\delta\to 1}\frac{1}{E[\tau_\delta]}E_0\left[
\sum_{m=1}^{\tau_\delta-1}f(t_{X_k}\omega )\right]
=S\int fd\mu.
$$
Hence,

$$
\lim_{\delta\to 1}\frac{1}{E[\tau_\delta]}E'_0\left[
\sum_{m=1}^{\tau_\delta-1}f(t_{X_k}\omega )\right]
=\int fd\mu,
$$
which proves the claim.

\end{proof}

\medskip
\section{Green function expansion}
\label{green}

To prove Theorem \ref{theorem1},  we will extend the method
presented by Sabot in \cite{Sa04}, starting with perturbative
estimates for the Green function of the random walk. To do this, we need first the following lemma, which we will use several times.

\begin{lemma}
\label{unif}
For each $e \in V$, $y, z \in \Z^d $, with $y \neq z$ and $\omega \in \Omega$, we have that
\begin{equation}
\label{unif1}
g_{\delta}^\omega(y,z) \geq \delta \kappa g_\delta^\omega (y,z+e).
\end{equation}
\end{lemma}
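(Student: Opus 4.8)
The plan is to write $g_\delta^\omega(y,z)$ as a discounted Green function and then to compare, step by step, the time spent at $z$ with the time spent at $z+e$. Let $q_n$ denote the probability that $\tau_\delta>n$. Since $\tau_\delta$ is independent of the walk and has a geometric law of parameter $1-\delta$, one has $q_{n+1}=\delta q_n$ for all $n\ge 0$, and a Fubini argument yields
\begin{equation}
\label{discounted}
g_\delta^\omega(y,z)=E'_{y,\omega}\left[\sum_{n=0}^{\tau_\delta-1}1_z(X_n)\right]=\sum_{n=0}^\infty q_n\,P_{y,\omega}(X_n=z).
\end{equation}
The only feature of the geometric law I will use is the ratio identity $q_{n+1}=\delta q_n$.

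The key estimate is the one-step inequality
\begin{equation}
\label{onestep}
P_{y,\omega}(X_{n+1}=z)\ge \kappa\,P_{y,\omega}(X_n=z+e),\qquad n\ge 0.
\end{equation}
To obtain it, I decompose $P_{y,\omega}(X_{n+1}=z)$ according to the position of the walk at time $n$ and retain only the contribution of the event $\{X_n=z+e\}$: starting from $z+e$, the walk reaches $z$ in one step by performing the jump $-e\in V$, whose quenched probability is $\omega(z+e,-e)\ge\kappa$ by uniform ellipticity. All the remaining terms of the decomposition are nonnegative, so (\ref{onestep}) follows.

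It remains to combine (\ref{discounted}) and (\ref{onestep}). Because $y\neq z$, the $n=0$ term of the sum in (\ref{discounted}) vanishes, so reindexing gives $g_\delta^\omega(y,z)=\sum_{n\ge 0}q_{n+1}\,P_{y,\omega}(X_{n+1}=z)$; inserting the ratio identity $q_{n+1}=\delta q_n$ and then the one-step bound (\ref{onestep}) termwise, I obtain $g_\delta^\omega(y,z)=\delta\sum_{n\ge 0}q_n\,P_{y,\omega}(X_{n+1}=z)\ge\delta\kappa\sum_{n\ge 0}q_n\,P_{y,\omega}(X_n=z+e)=\delta\kappa\,g_\delta^\omega(y,z+e)$, which is exactly (\ref{unif1}). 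The argument is elementary; the only points that require care are the use of the independence of $\tau_\delta$ to factor $q_n$ out of the expectation in (\ref{discounted}), and matching the index shift to the geometric ratio so that exactly one factor $\delta$ is produced. The hypothesis $y\neq z$ enters only to make the discarded term vanish, which turns the reindexing into an equality.
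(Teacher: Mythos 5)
Your proof is correct and is essentially the paper's argument: the paper invokes the last-step identity $g^\omega_\delta(y,z)=\delta_{y,z}+\delta \sum_{e \in V} g_{\delta}^\omega (y, z+e)\, \omega(z+e,-e)$ together with uniform ellipticity, and your derivation (geometric factorization $q_{n+1}=\delta q_n$ plus the one-step Markov decomposition retaining only the $\{X_n=z+e\}$ term) is precisely a from-scratch proof of the inequality form of that identity. The only difference is that you unpack what the paper quotes in one line; the substance is identical.
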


\begin{proof}
It is enough to see that (\ref{unif1}) is derived from the next equality 
$$
g^\omega_\delta(y,z)=\delta_{y,z}+\delta \sum_{e \in V} g_{\delta}^\omega (y, z+e)\, \omega(z+e,e)
$$
and the fact that the environment is uniformly elliptic. 
\end{proof}

The main result of this section, is the following lemma
which extends Lemma 1 of \cite{Sa04} for perturbations at
one site of the Green function, to perturbation at
multiple sites.

\medskip

\begin{lemma}
\label{gfe}
Consider an environment $\omega\in\Omega$. For  $B\subset \Z^d$ 
consider an environment $\omega^B$  which is a perturbation of $\omega$ in each  of the points in $B$. That  is,  we have for each $e \in V$
$$
\omega^{B}(x,e):=\left\{\begin{array}{ccc}
\omega(x,e) & {\rm if} & x \not \in B, \\
\omega(x,e)+\Delta_x \omega(e) & {\rm if} & x  \in B,  
\end{array}\right.
$$
for some $\left\{\Delta_x \omega(e):e\in V\right\} \in ]-1,1[^V$
    for each $x\in B$.
Let $0<\delta < 1$. Then, for each $y, y' \in \mathbb Z^d$, one has that 

\begin{equation}
\label{Green expansion1}
\left|g^{\omega^{B}}_{\delta}(y,y')-g^{\omega}_{ \delta}(y,y') \right| \leq c_3\, g^{\omega^{B}}_{ \delta} (y,y')
\end{equation}
and 
\begin{eqnarray}
\label{Green expansion2}
& &\left|g^{\omega^{B}}_{ \delta}(y,y')-g^{\omega}_{ \delta}(y,y') \right. \nonumber \\
& & \left.  -\sum_{x \in B}g^{\omega}_{ \delta}(y,x)\sum_{e \in V} \Delta_x \omega(e)\left[\delta g^{\omega}_{\delta}(x+e,y')-g^{\omega}_{\delta}(x,x) \right] \right| \nonumber \\
& & \leq \frac{\left( 2d \sup_{e \in V, x \in B}|\Delta_x \omega(e)|\right)^2}{\kappa^3} \left[1+\frac{n-1}{(\delta\, \kappa)^{\rho(B)}}\right](1+c_3)n\,\,g^{\omega^B}_{ \delta}(y,y'),
\end{eqnarray}
where 
\begin{equation}
\label{constant1}
c_3=c_3(d, \kappa, B):=\frac{2dn \sup_{e \in V, \, x \in B}  |\Delta_x \omega(e)|}{\kappa^2} \left[\frac{2d \sup_{e \in V, \, x \in B}  |\Delta_x \omega(e)|}{\kappa^2} +1\right]^{n-1},
\end{equation}
and
$\rho(B)$ and $n$, are the diameter and the cardinality of $B$, respectively.

\end{lemma}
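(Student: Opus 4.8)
**The plan is to establish both estimates by iterating the single‑point perturbation formula of Sabot over the points of $B$.**

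The starting point is the resolvent (Dyson) identity relating $g^{\omega^B}_\delta$ to $g^\omega_\delta$ through the perturbation. Writing $Q$ for the one‑step substochastic kernel of the walk killed at rate $1-\delta$, one has $g_\delta = (I - \delta Q)^{-1}$, and if $\Delta Q$ denotes the change in the kernel induced by replacing $\omega$ by $\omega^B$ (supported only on rows indexed by $B$), the second resolvent identity gives
\begin{equation}
\label{resolvent}
g^{\omega^B}_\delta(y,y') = g^\omega_\delta(y,y') + \delta\sum_{x\in B}\sum_{e\in V}g^{\omega^B}_\delta(y,x)\,\Delta_x\omega(e)\,g^\omega_\delta(x+e,y').
\end{equation}
Here I would take care to write the perturbed Green function $g^{\omega^B}_\delta(y,x)$ on the left factor, since it is this asymmetric form that lets one bound the difference by $g^{\omega^B}_\delta(y,y')$ itself rather than by $g^\omega_\delta$. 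To prove \eqref{Green expansion1}, I would fix $y'$ and $x$ and use Lemma \ref{unif} to control the "internal" Green functions: by iterating \eqref{unif1} along a shortest path, for any $x\in B$ one has $g^{\omega^B}_\delta(y,x)\le (\delta\kappa)^{-|x-y'|_1}g^{\omega^B}_\delta(y,y')$, and similarly $g^\omega_\delta(x+e,y')$ is comparable to $g^\omega_\delta(y,y')$ up to a factor controlled by $\rho(B)$. Feeding these into the sum in \eqref{resolvent}, factoring out $g^{\omega^B}_\delta(y,y')$, and summing the resulting geometric-type series over the $n=\card(B)$ points yields a bound of the exact shape of the constant $c_3$ in \eqref{constant1}, where the factor $[\,\cdot\,]^{n-1}$ arises from resumming the perturbation at the remaining $n-1$ sites.

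For the second‑order estimate \eqref{Green expansion2}, the idea is to insert the first‑order estimate back into \eqref{resolvent}: replace $g^{\omega^B}_\delta(y,x)$ in the linear term by $g^\omega_\delta(y,x)$ plus an error controlled by \eqref{Green expansion1}, and likewise expand $g^\omega_\delta(x+e,y')$ using the one‑step relation $g^\omega_\delta(x,y')=\delta_{x,y'}+\delta\sum_e g^\omega_\delta(x+e,y')\omega(x,e)$ (the same relation underlying Lemma \ref{unif}) to produce the combination $\delta g^\omega_\delta(x+e,y')-g^\omega_\delta(x,x)$ appearing in the stated claim. The leading term is then precisely $\sum_{x\in B}g^\omega_\delta(y,x)\sum_e\Delta_x\omega(e)[\delta g^\omega_\delta(x+e,y')-g^\omega_\delta(x,x)]$, and everything left over is at least quadratic in $\sup|\Delta_x\omega(e)|$. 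The quadratic remainder is bounded by combining \eqref{Green expansion1} (which contributes the factor $(1+c_3)$) with the path estimates from Lemma \ref{unif} (which contribute the factor $[1+(n-1)(\delta\kappa)^{-\rho(B)}]$ measuring how far the perturbed sites are spread out), and an overall factor $n$ from the number of terms; this reproduces the right‑hand side of \eqref{Green expansion2}.

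The main obstacle, and the place where the bookkeeping is delicate, is the passage from the single‑site perturbation of Sabot to the multi‑site case: one cannot simply sum $n$ independent single‑site corrections, because perturbing at one point of $B$ changes the Green function that enters the correction at every other point. I expect the real work to be in organizing this dependence so that the errors telescope cleanly — concretely, in showing that resumming the mutual influence of the $n$ perturbed sites produces only the geometric factor $[\tfrac{2d\sup|\Delta_x\omega(e)|}{\kappa^2}+1]^{n-1}$ in $c_3$ and the factor $[1+(n-1)(\delta\kappa)^{-\rho(B)}]$ in the quadratic bound, rather than a worse combinatorial growth. Controlling the $\delta$‑dependence uniformly as $\delta\to 1$, via the ellipticity bound $\kappa$ and the diameter $\rho(B)$ through Lemma \ref{unif}, is what makes the resulting constants depend only on $d$, $\kappa$ and $B$ as claimed.
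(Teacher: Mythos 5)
Your starting point, the resolvent (Dyson) identity $g^{\omega^B}_\delta(y,y')-g^{\omega}_\delta(y,y')=\delta\sum_{x\in B}\sum_{e\in V}g^{\omega^B}_\delta(y,x)\,\Delta_x\omega(e)\,g^{\omega}_\delta(x+e,y')$, is correct, but the estimates you build on it contain a genuine gap. To prove (\ref{Green expansion1}) you bound $g^{\omega^B}_\delta(y,x)\le(\delta\kappa)^{-|x-y'|_1}g^{\omega^B}_\delta(y,y')$ and claim that $g^{\omega}_\delta(x+e,y')$ is comparable to $g^{\omega}_\delta(y,y')$ ``up to a factor controlled by $\rho(B)$''. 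Neither step can work. The exponent $|x-y'|_1$ is the distance from the perturbed set $B$ to the arbitrary point $y'$, which is unbounded, so the resulting constant would depend on $y,y'$, whereas $c_3$ in (\ref{constant1}) must depend only on $d,\kappa,B$. Worse, Lemma \ref{unif} only allows shifting the \emph{second} argument of the Green function; it gives no comparison between $g^{\omega}_\delta(x+e,y')$ and $g^{\omega}_\delta(y,y')$, whose starting points $x+e$ and $y$ are at a distance having nothing to do with $\rho(B)$, and such a comparison is false in general (for a ballistic walk these two quantities can differ by a factor exponentially large in that distance). The legitimate use of $\rho(B)$, which is what the paper does, is to compare $g_\delta(y,x)$ with $g_\delta(y,z)$ for $x,z\in B$, i.e.\ both target points inside $B$; this is where the factor $1+\tfrac{n-1}{(\delta\kappa)^{\rho(B)}}$ in (\ref{Green expansion2}) comes from, and it appears only in the quadratic remainder.

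The ingredient missing from your proposal, and the one that actually makes the resolvent term small uniformly in $y,y'$ and in $\delta\to1$, is the cancellation $\sum_{e\in V}\Delta_x\omega(e)=0$ (both $\omega(x,\cdot)$ and $\omega^B(x,\cdot)$ are probability vectors). It is this cancellation --- not the one-step relation you invoke --- that permits replacing $\delta g^{\omega}_\delta(x+e,y')$ by the differences $\delta g^{\omega}_\delta(x+e,y')-g^{\omega}_\delta(x,y')$ or $\delta g^{\omega}_\delta(x+e,y')-g^{\omega}_\delta(x,x)$, to which Sabot's inequalities $|\delta g_\delta(z+e,z)-g_\delta(z,z)|\le\kappa^{-1}$ and $|\delta g_\delta(z+e,y')-g_\delta(z,y')|\le\kappa^{-2}\,g_\delta(z,y')/g_\delta(z,z)$ apply; combined with the Markov-property inequality $g(y,x)\,g(x,y')/g(x,x)\le g(y,y')$ these yield position-independent bounds, while without the cancellation the natural bound carries a factor $g(x,x)\sim(1-\delta)^{-1}$ that blows up as $\delta\to1$. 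Moreover, even with these tools, running the argument ``all at once'' for the whole set $B$ forces you to mix the environments $\omega$ and $\omega^B$ inside the same estimate, which is circular: controlling that mixture is exactly the content of (\ref{Green expansion1}). The paper avoids this by a different organization of the first-order bound: it adds the points of $B$ one at a time, applies Sabot's single-site Lemma~1 to each step, $|g^{\omega^{B_{k+1}}}_\delta-g^{\omega^{B_k}}_\delta|\le\frac{2d\sup|\Delta|}{\kappa^2}\,g^{\omega^{B_{k+1}}}_\delta$, and telescopes; the geometric factor $\bigl[\frac{2d\sup|\Delta|}{\kappa^2}+1\bigr]^{n-1}$ in $c_3$ arises from converting the intermediate Green functions back to $g^{\omega^B}_\delta$, and only afterwards is the resolvent identity used (in its second-order form) to get (\ref{Green expansion2}). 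Your write-up correctly anticipates the shape of the constants but not the mechanism that produces them.
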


\begin{proof}

Let us denote by $B_k$ the set $\{x_1, \ldots, x_k\}$, with $B_n=B$ and $B_0=\emptyset$. We can see, as in the proof of Lemma 1 of Sabot \cite{Sa04}, that for each $y, y'\in \Z^d$, the following inequality is satisfied
$$
\Bigl|g_{\delta}^{\omega^{B_{k+1}}}(y,y')-g_{ \delta}^{\omega^{B_k}}(y,y')\Bigr| \leq \frac{2d \sup_{e \in V}|\Delta_{k+1}\omega(e)|}{\kappa^2}g_{\delta}^{\omega^{B_{k+1}}}(y,y'), \quad \forall k=0, 1, \ldots, n-1,
$$
where $\Delta_{m}=\Delta_{x_m}$ for each $m=1, \ldots, n$. From this, it is easy to deduce that
$$
\left|g^{\omega^{B}}_{\delta}(y,y')-g^{\omega}_{\delta}(y,y') \right| \leq \frac{2d \sup_{e \in V, \, x \in B}  |\Delta_x \omega(e)|}{\kappa^2} \sum_{k=0}^{n-1} g_{\delta}^{\omega^{B_{k+1}}}(y,y').
$$
But repeating the same upper bound a finite number of times, one can deduce that
$$
g_{\delta}^{B_{k}}(y,y') \leq \left[\frac{2d \sup_{e \in V, \, x \in B}  |\Delta_x \omega(e)|}{\kappa^2} +1\right]^{n-k}g_{\delta}^{B}(y,y'), \quad \forall k=1, \ldots, n.
$$
Now, (\ref{Green expansion1}) follows easily.
 Meanwhile, in order to prove (\ref{Green expansion2}), we will use (\ref{Green expansion1}) and the two following inequalities, which are valid in any environment $\omega$,
\begin{equation}
\label{Green1}
|\delta g_{\delta}^\omega (z+e, z)-g_{\delta}^\omega(z,z)| \leq \frac{1}{\kappa}, \quad \forall z \in \Z^d, \,\, e \in V,
\end{equation}
and
\begin{equation}
\label{Green2}
|\delta g_{\delta}^\omega (z+e, y')- g_{\delta}^\omega (z,y')| \leq \frac{1}{\kappa^2}\frac{g_{\delta}^\omega (z,y')}{g_{\delta}^\omega(z,z)}, \quad \forall z \in \Z^d, \,\, e \in V
\end{equation}
(for more details, see Lemma 1 of \cite{Sa04}). Now,
 through standard
Green operator expansions, we can obtain the
following second order expansion  of $g_\delta^{\omega^B}$, 
\begin{eqnarray}
\label{Green3}
& &g^{\omega^{B}}_{ \delta}(y,y')-g^{\omega}_{ \delta}(y,y')-\sum_{x \in B} g^{\omega}_{ \delta}(y,x)\sum_{e \in V} \Delta_{x} \omega(e)\left[\delta g^{\omega}_{\delta}(x+e,y')-g^{\omega}_{\delta}(x,x) \right] \nonumber\\
& & = \sum_{x \in B}\sum_{e \in V} g_{\delta}^{\omega}(y,x)\Delta_x \omega(e)\Bigl[\delta g_{\delta}^{\omega}(x+e,x)-g_{\delta}^{\omega}(x,x)\Bigr] \nonumber  \\ 
& & \times \sum_{z \in B}\sum_{e'\in V}\Delta_z \omega(e') \Bigl[\delta g_{\delta}^{\omega^B}(z+e',y')-g_{\delta}^{\omega^B}(z,y')\Bigr].
\end{eqnarray} 
Hence,  with the help of (\ref{Green1}),(\ref{Green2}) and (\ref{Green3}), we can see that

\begin{eqnarray}
\label{Green4}
& & \Bigl| g^{\omega^{B}}_{ \delta}(y,y')-g^{\omega}_{ \delta}(y,y')-\sum_{x \in B} g^{\omega}_{ \delta}(y,x)\sum_{e \in V} \Delta_{x} \omega(e)\left[\delta g^{\omega}_{\delta}(x+e,y')-g^{\omega}_{\delta}(x,x) \right] \Bigr| \nonumber\\
& & \leq \frac{1}{\kappa^3} \left(2d \sup_{e \in V, \, z \in B}|\Delta_z \omega(e)|\right)^2 \sum_{x,z \in B} \frac{g_{\delta}^{\omega}(y,x) g_{\delta}^{\omega^B}(z,y')}{g_{\delta}^{\omega^B}(z,z)} \nonumber \\
& & = \frac{1}{\kappa^3} \left(2d \sup_{e \in V, \, z \in B}|\Delta_z \omega(e)|\right)^2 \left[\sum_{z \in B} \frac{g_{\delta}^{\omega}(y,z) g_{\delta}^{\omega^B}(z,y')}{g_{\delta}^{\omega^B}(z,z)}+\sum_{\overset{x, z \in B} {x \neq z}}\frac{g_{\delta}^{\omega}(y,x) g_{\delta}^{\omega^B}(z,y')}{g_{\delta}^{\omega^B}(z,z)} \right] \nonumber\\
& & \leq \frac{1}{\kappa^3} \left(2d \sup_{e \in V, \, z \in B}|\Delta_z \omega(e)|\right)^2 \times \left(1+\frac{n-1}{(\delta \kappa)^{\rho(B)}}\right) \left[\sum_{z \in B} \frac{g_{\delta}^{\omega}(y,z) g_{\delta}^{\omega^B}(z,y')}{g_{\delta}^{\omega^B}(z,z)}\right],
\end{eqnarray}
where in the last step, for each $z$ we selected a non-random nearest neighbor self-avoiding path from $z$ to $x$ and we used the inequality $g_\delta^\omega (y,z) \geq \delta \kappa g_{\delta}^\omega (y, z+e)$, for each $e  \in V$, which is an easy consequence of (\ref{unif1}) of Lemma \ref{unif}. Now, with the help of (\ref{Green expansion1}), for each $z \in B$ one can deduce that
\begin{equation}
\label{Green5}
\frac{g_\delta^\omega (y,z)}{g_\delta^{\omega^B}(y,z)}\leq 1+c_3, 
\end{equation}
where $c_3$ is defined in (\ref{constant1}). Thus, we can substitute (\ref{Green5}) into (\ref{Green4}) to conclude that
\begin{eqnarray*}
& &\Bigl| g^{\omega^{B}}_{ \delta}(y,y')-g^{\omega}_{ \delta}(y,y')-\sum_{x \in B} g^{\omega}_{ \delta}(y,x)\sum_{e \in V} \Delta_{x} \omega(e)\left[\delta g^{\omega}_{\delta}(x+e,y')-g^{\omega}_{\delta}(x,x) \right] \Bigr|  \\
& & \leq \frac{1}{\kappa^3} \left(2d \sup_{e \in V, \, z \in B}|\Delta_z \omega(e)|\right)^2 \times \left(1+\frac{n-1}{(\delta \kappa)^{\rho(B)}}\right) (1+c_3)n \,  g_{\delta}^{\omega^B}(y,y').
\end{eqnarray*}

\end{proof}

\section{Local function expansions}
\label{local}
In this section we will derive an asymptotic expansion in
the perturbation parameter $\epsilon$ for certain
expectations of the given local function $f$, involving the
Green function of the random walk. In fact, these expectations
are with respect to the so called Kalikow environment
\cite{K81}.

\medskip

\begin{proposition}
\label{expansion}
Let $A$ be a finite fixed subset of $\Z^d$. Consider a continuous function $f$ defined on $\Omega_{p, \epsilon}$, which depends only on sites located at $A$. 
Let $\eta>0$. Then, there exists an $\epsilon_0>0$, 
and a constant
$c_4=c_4(\eta)$, such that for all $0<\epsilon\le\epsilon_0$,
whenever $\delta$ is close enough to $1$,
there is a  function $h_\delta$ such that 
for each $y \in \Z^d$ 
the following identity is satisfied.

\begin{eqnarray} 
\label{1st estimate}
& \frac{\E \Bigl[g^{\omega}_{\delta}(0,y)f \left(\theta_y \omega\right)\Bigr]}{\E \Bigl[g^{\omega}_{\delta}(0,y)\Bigr]} = \E[f]+\epsilon 
\frac{1}{\E \Bigl[g_{\delta}^{\omega}(0,y)\Bigr]}\sum_{z \in A}\sum_{e \in V} {\rm Cov}\,\Bigl[\xi(z,e), f\Bigr] J_{p^*_\epsilon}(z+e) \E \Bigl[g_{\delta}^{\omega}(0,z+y)\Bigr]
\nonumber \\
&  +\E[h_\delta f]\times O(\epsilon^{2-\eta}),
\end{eqnarray}
where for all $0<\epsilon\le \epsilon_0$ we have that 
\begin{equation}
\label{expansion1a}
\Bigl|\E[|h_\delta |f] \Bigr| \le
\E\Bigl[|f|\Bigr], 
\end{equation}
 $|O(\epsilon^{2-\eta})|\le c_4 \epsilon^{2-\eta}$ and
 $J_{p^*_\epsilon}(x)$ is defined in (\ref{jepsilon}).
\end{proposition}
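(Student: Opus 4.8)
The plan is to expand only the numerator $\E[g^\omega_\delta(0,y)f(\theta_y\omega)]$ and then divide by the denominator $\E[g^\omega_\delta(0,y)]$, which I leave untouched. Writing $f(\theta_y\omega)=\E[f]+(f(\theta_y\omega)-\E[f])$, the constant part contributes exactly $\E[f]$, so the whole problem reduces to a first order expansion of $\E\bigl[g^\omega_\delta(0,y)(f(\theta_y\omega)-\E[f])\bigr]$. The crucial observation is that $f(\theta_y\omega)$ depends on $\omega$ only through its restriction to the finite set $B:=A+y$, which singles out the sites at which I will perturb the Green function.

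Let $\hat\omega$ be the environment that agrees with $\omega$ off $B$ and equals the mean kernel $p_\epsilon$ at every site of $B$; then $\omega$ is the perturbation of $\hat\omega$ at the sites of $B$ with increments $\Delta_x\hat\omega(e)=\omega(x,e)-p_\epsilon(e)=\epsilon\,\bar\xi(x,e)$. Since $|\omega(x,e)-p_0(e)|\le\epsilon$ forces $|\bar\xi(x,e)|\le 2$, Lemma \ref{gfe} applies with $\sup_{x\in B,e}|\Delta_x\hat\omega(e)|\le 2\epsilon$ and yields
\[
g^\omega_\delta(0,y)=g^{\hat\omega}_\delta(0,y)+\epsilon\sum_{x\in B}\sum_{e\in V}\bar\xi(x,e)\,g^{\hat\omega}_\delta(0,x)\bigl[\delta g^{\hat\omega}_\delta(x+e,y)-g^{\hat\omega}_\delta(x,x)\bigr]+R,
\]
where, by (\ref{Green expansion2}), $|R|\le c\,\epsilon^2\,g^\omega_\delta(0,y)$ with $c=c(\kappa,d,B)$ bounded for $\delta$ near $1$, because $B$ has fixed cardinality and diameter.

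Now $\hat\omega$ is a function of $\{\omega(x):x\notin B\}$ alone, hence independent of $f(\theta_y\omega)$ and of every $\bar\xi(x,e)$ with $x\in B$. Multiplying by $f(\theta_y\omega)-\E[f]$ and taking $\E$, the leading term $\E[g^{\hat\omega}_\delta(0,y)(f(\theta_y\omega)-\E[f])]$ vanishes (factorise and use $\E[f(\theta_y\omega)]=\E[f]$), while each first order summand factorises as
\[
\E\bigl[g^{\hat\omega}_\delta(0,x)\bigl(\delta g^{\hat\omega}_\delta(x+e,y)-g^{\hat\omega}_\delta(x,x)\bigr)\bigr]\;\E\bigl[\bar\xi(x,e)\,f(\theta_y\omega)\bigr].
\]
With $x=z+y$, $z\in A$, stationarity of $\P$ turns the second factor into $\Cov[\xi(z,e),f]$. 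To extract $J_{p^*_\epsilon}$ I use that $\sum_{e\in V}\xi(z,e)=0$ (since $\sum_e\omega(z,e)=\sum_e p_0(e)=1$), hence $\sum_{e\in V}\Cov[\xi(z,e),f]=0$; this lets me add or drop any term constant in $e$, and in particular it preserves the difference form $\delta g^{\hat\omega}_\delta(x+e,y)-g^{\hat\omega}_\delta(x,x)$. In the homogeneous kernel $p_\epsilon$ this difference equals $\delta G_\delta(-z-e)-G_\delta(0)$ with $G_\delta:=g^{p_\epsilon}_\delta(0,\cdot)$, and converges as $\delta\to1$ to $J_{p^*_\epsilon}(z+e)$ [c.f. (\ref{jepsilon})]. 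Replacing the remaining factor $\E[g^{\hat\omega}_\delta(0,z+y)]$ by $\E[g^\omega_\delta(0,z+y)]$ costs a factor $1+O(\epsilon)$ by (\ref{Green expansion1}), and together with the overall $\epsilon$ this stays within the remainder.

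The hard part will be the passage from the still-random Green functions $g^{\hat\omega}_\delta$ (the environment remains random off $B$) to the homogeneous $p_\epsilon$ quantities, and the decorrelation of the difference factor from $g^{\hat\omega}_\delta(0,z+y)$: the two environments differ at infinitely many sites, so Lemma \ref{gfe} does not apply directly. I would control this by iterating the estimate over an exhausting sequence of boxes and summing the resulting Green functions using the transience and drift $\gtrsim\epsilon$ of $p_\epsilon$ guaranteed by condition {\bf (LD)}. The quantitative cost is that the $p_\epsilon$ Green functions and $J_{p^*_\epsilon}$ grow as the drift tends to $0$ (logarithmically in $1/\epsilon$ for $d=2$, at worst a small power of $1/\epsilon$ otherwise); absorbing this growth is exactly what turns $\epsilon^2$ into $\epsilon^{2-\eta}$, and forces $\delta$ to be taken close enough to $1$ (depending on $\epsilon,\eta$) so that the finite-$\delta$ discrepancies $\delta g_\delta-g$ also fall within budget. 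Finally, every error term produced above is linear in $f$ and, after dividing by $\E[g^\omega_\delta(0,y)]$ and comparing $g^\omega_\delta(0,y)$ with the independent $g^{\hat\omega}_\delta(0,y)$ through (\ref{Green expansion1}), is bounded by $c_4\epsilon^{2-\eta}\E[|f|]$; taking $h_\delta:=g^{\hat\omega}_\delta(0,y)/\E[g^{\hat\omega}_\delta(0,y)]$, which is nonnegative, of mean one and independent of the $B$-marginal, yields the bound (\ref{expansion1a}) and lets the remainder be written in the stated form $\E[h_\delta f]\times O(\epsilon^{2-\eta})$, completing the identity (\ref{1st estimate}).
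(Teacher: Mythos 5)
Your first half coincides with the paper's own proof: the decomposition $f=\E[f]+\bar f$, the introduction of the environment $\omega^{A+y}$ (your $\hat\omega$), deterministic on $A+y$ and independent of $f\circ\theta_y$, the expansion of $g^{\omega}_\delta(0,y)-g^{\omega^{A+y}}_\delta(0,y)$ via Lemma \ref{gfe}, and the factorization through independence producing the covariances $\Cov[\xi(z,e),f]$ are exactly the steps leading to (\ref{fexp3}). The genuine gap is the step you yourself flag as ``the hard part'': showing that
\[
\frac{\E\bigl[g^{\omega^{A+y}}_\delta(0,z+y)\bigl(\delta g^{\omega^{A+y}}_\delta(z+y+e,y)-g^{\omega^{A+y}}_\delta(z+y,z+y)\bigr)\bigr]}{\E\bigl[g^{\omega^{A+y}}_\delta(0,z+y)\bigr]}
= J_{p^*_\epsilon}(z+e)+O(\epsilon^{1-\eta})
\]
for $\delta$ close to $1$. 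This is not a technical remainder but the core of the proposition (Lemma \ref{Sabot} in the paper, an adaptation of Lemma 3 of Sabot), and the route you sketch --- comparing $g^{\hat\omega}_\delta$ with the homogeneous $g^{p_\epsilon}_\delta$ by iterating Lemma \ref{gfe} over an exhausting sequence of boxes --- would fail as stated. The environment $\hat\omega$ differs from the kernel $p_\epsilon$ by terms of order $\epsilon$ at \emph{every} site outside $A+y$, so a quenched, site-by-site comparison accumulates first-order errors of size $\epsilon$ weighted by Green functions summed over infinitely many sites; such sums blow up as $\delta\to 1$ and are in no sense $O(\epsilon^{1-\eta})$. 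The required smallness is not quenched at all: the first-order contributions cancel only under the annealed expectation, through $\E[\bar\xi(x,e)]=0$ and independence, and this cancellation must be carried out jointly with the decorrelation of the two correlated Green-function factors appearing in the numerator.

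The device in the paper that accomplishes both at once, and which is absent from your proposal, is Kalikow's auxiliary environment: tilting $\P$ by $g^{\omega^{A+y}}_\delta(0,z+y)/\E[g^{\omega^{A+y}}_\delta(0,z+y)]$, the ratio above becomes exactly $\delta g^{\tilde\omega}_\delta(z+y+e,y)-g^{\tilde\omega}_\delta(z+y,z+y)$ for a single deterministic (inhomogeneous) environment $\tilde\omega$; then the independence of $g^{\omega^{A\cup\{x\}}}_\delta$ from $\bar\xi(x,e)$ shows [c.f. (\ref{env1})--(\ref{env2})] that $\tilde\omega(x,e)=\E[\omega(x,e)]+\epsilon^2\Delta\omega(x,e)$ with $\Delta\omega$ uniformly bounded, i.e. the disorder seen by the Kalikow walk is $O(\epsilon^2)$ per site, not $O(\epsilon)$. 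Only this quadratic smallness, combined with the ballisticity provided by the local drift condition (\ref{HIP}) to sum the perturbations against Green functions (steps 2 and 3 of Sabot's Lemma 3), yields the $\epsilon^{1-\eta}$ error --- and this is precisely where the loss from $\epsilon$ to $\epsilon^{1-\eta}$ occurs, which your sketch attributes to the correct mechanism but never substantiates. A secondary, lesser issue: your single choice $h_\delta=g^{\hat\omega}_\delta(0,y)/\E[g^{\hat\omega}_\delta(0,y)]$ cannot represent all the remainders, since several of them (for instance $\E[g^\omega_\delta(0,y)\bar f(\theta_y\omega)O_1(\epsilon^2)]/\E[g^\omega_\delta(0,y)]$) involve random $O(\epsilon^2)$ factors correlated with the environment on $A+y$; the paper instead constructs $h_\delta$ explicitly as a sum of eight terms and verifies the bound (\ref{expansion1a}) for each of them.
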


\medskip

Let us now prove Proposition \ref{expansion}.
 Note that trivially we can get
\begin{equation}
\label{fexp0}
\frac{\E \Bigl[g^{\omega}_{\delta}(0,y)f \left(\theta_y \omega\right)\Bigr]}{\E \Bigl[g^{\omega}_{\delta}(0,y)\Bigr]} =\frac{\E \Bigl[g^{\omega}_{\delta}(0,y)\bar{f} \left(\theta_y \omega\right)\Bigr]}{\E \Bigl[g^{\omega}_{\delta}(0,y)\Bigr]}+\E[f],
\end{equation}
where $\bar{f}=f-\E[f]$.
Next, using the independence between $g^{\omega^{A+y}}_{\delta}$ and $f \circ \theta_y$ and the fact that $\bar{f}$ is a centered random variable, we can see that 
\begin{equation}
\label{fexp1}
 \frac{\E \Bigl[g^{\omega}_{\delta}(0,y)\bar{f} \left(\theta_y \omega\right)\Bigr]}{\E \Bigl[g^{\omega}_{\delta}(0,y)\Bigr]}\nonumber  =
 \frac{\E \Bigl[\left(g^{\omega}_{\delta}(0,y)-g^{\omega^{A+y}}_{\delta}(0,y)\right)\bar{f} \left(\theta_y \omega\right)\Bigr]}{\E \Bigl[g^{\omega}_{ \delta}(0,y)\Bigr]}.
\end{equation}
Note that $g_{\delta}^{\omega^{A+y}}$ does not depend on the coordinates
of $\omega$  located at $A+y$. Thus, using inequality (\ref{Green expansion2}) of Lemma \ref{gfe}, we can deduce that
\begin{eqnarray}
\label{fexp2}
& &\frac{\E \Bigl[\left(g^{\omega}_{\delta}(0,y)-g^{\omega^{A+y}}_{\delta}(0,y)\right)\bar{f} \left(\theta_y \omega\right)\Bigr]}{\E \Bigl[g^{\omega}_{ \delta}(0,y)\Bigr]} \nonumber \\
& & =\frac{\epsilon\,\E \left[\sum_{z \in A+y}\sum_{e \in V}g^{\omega^{A+y}}_{\delta}(0,z) \bar{\xi}(z,e) \left(\delta g^{\omega^{A+y}}_{\delta}(z+e,y)-g^{\omega^{A+y}}_{\delta}(z,z)\right)\bar{f}\left(\theta_y \omega\right)\right]}{\E \Bigl[g^{\omega}_{\delta}(0,y)\Bigr]}\nonumber \\ 
& & +
\frac{\E[g_\delta^\omega (0,y)\bar{f}(\theta_y \omega)\times O_1(\epsilon^2)]}{\E[g_\delta^\omega (0,y)]}.
\end{eqnarray}
where $O_1(\epsilon)$ satisfies the inequality
\begin{equation}
\label{fexp2.1}
|O_1(\epsilon^2)| \leq \frac{8d^2}{\kappa^3} \left[1+\frac{n-1}{(\delta\, \kappa)^{\rho(A)}}\right](1+c_5)n \epsilon^2,
\end{equation}
$n$ is the cardinality of $A$ and here $c_5$ is defined by (see (\ref{epsilon-condition}), (\ref{xi}) and (\ref{constant1}))

$$c_5=c_5(d, \kappa, A):=\frac{2dn \epsilon} {\kappa^2} \left[\frac{2d \epsilon}{\kappa^2} +1\right]^{n-1}.$$

\noindent By the independence between $\bar\xi(z,e)$ for $z\in A+y$
and the Green function $g_\delta^{\omega^{A+y}}$, we can see by (\ref{fexp2})
that

\begin{eqnarray}
\label{fexp2.2} 
& &\frac{\E \Bigl[\left(g^{\omega}_{\delta}(0,y)-g^{\omega^{A+y}}_{\delta}(0,y)\right)\bar{f} \left(\theta_y \omega\right)\Bigr]}{\E \Bigl[g^{\omega}_{ \delta}(0,y)\Bigr]} \nonumber \\
& & = \epsilon \sum_{z \in A}\sum_{e \in V} {\rm Cov}\,\Bigl[\xi(z,e), f(\omega)\Bigr] \times \frac{\E \Bigl[g^{\omega^{A+y}}_\delta (0,z+y)\left(\delta g^{\omega^{A+y}}_{\delta}(z+y+e,y)-g^{\omega^{A+y}}_{\delta}(z+y,z+y)\right)\Bigr]}{\E \Bigl[g^{\omega}_{\delta}(0,y)\Bigr]} \nonumber \\ 
& & +\frac{\E[g_\delta^\omega (0,y)\bar{f}(\theta_y \omega)\times O_1(\epsilon^2)]}{\E[g_\delta^\omega (0,y)]}.
\end{eqnarray}
In addition, with the help of (\ref{Green expansion1}) of
Lemma \ref{gfe}, we can thanks to the development of (\ref{fexp2.2})
conclude that

\begin{eqnarray}
\label{fexp3}
 &\frac{\E \Bigl[\left(g^{\omega}_{\delta}(0,y)-g^{\omega^{A+y}}_{\delta}(0,y)\right)\bar{f} \left(\theta_y \omega\right)\Bigr]}{\E \Bigl[g^{\omega}_{ \delta}(0,y)\Bigr]} \nonumber \\
 & = \epsilon \sum_{z \in A}\sum_{e \in V} {\rm Cov}\,\Bigl[\xi(z,e), f(\omega)\Bigr] \times \frac{\E \Bigl[g^{\omega^{A+y}}_\delta (0,z+y)\left(\delta g^{\omega^{A+y}}_{\delta}(z+y+e,y)-g^{\omega^{A+y}}_{\delta}(z+y,z+y)\right)\Bigr]}{\E \Bigl[g^{\omega^{A+y}}_{\delta}(0,y)\Bigr]} \nonumber \\
 & + \sum_{z \in A}\sum_{e \in V} {\rm Cov} \, \Bigl[\xi(z,e), f(\omega)\Bigr]\times\frac{\E \Bigl[g^{\omega^{A+y}}_\delta (0,z+y)\left(\delta g^{\omega^{A+y}}_{\delta}(z+y+e,y)-g^{\omega^{A+y}}_{\delta}(z+y,z+y)\right)\Bigr]}{\E \Bigl[g^{\omega^{A+y}}_{\delta}(0,y)\Bigr]} \nonumber \\
 & \times \frac{\E[g_\delta^\omega (0,y)O_2(\epsilon^2)]}{\E[g^\omega_\delta (0,y)]}+ \frac{\E[g_\delta^\omega (0,y)\bar{f}(\theta_y \omega) O_1(\epsilon^2)]}{\E[g_\delta^\omega (0,y)]},
\end{eqnarray}
where 
\begin{equation}
\label{fexp3.1}
|O_2(\epsilon^2)| \leq \left(\frac{4dn}{\kappa^2}\Bigl[\frac{2d \epsilon}{\kappa^2}+1\Bigr]^{n-1}\right)\epsilon^2.
\end{equation}
Now, to express the second term of (\ref{fexp3}) in
terms of  $J_{p^*_\epsilon}$ [c.f. (\ref{jepsilon})], we will 
require a lemma which is a variation of Lemma 3 of \cite{Sa04}.
For $v\in\mathbb Z^d$, define
\begin{equation}
\label{auxiliar}
\phi^{\epsilon}(v):=\prod_{i=1}^d \left(\sqrt{\frac{p_{\epsilon}(-e_i)}{p_{\epsilon}(e_i)}}\right)^{v_i},
\end{equation}
where $v_i$ are the coordinates of $v$. Also,
for each $z \in A$, $e \in V$ and $y \in \Z^d$, define
\begin{equation}
\label{JGreen}
J_e^{\delta}(y, z):=\frac{\E \Bigl[g^{\omega^{A+y}}_\delta(0,z+y)\Bigl(\delta g^{\omega^{A+y}}_\delta(z+y+e,y)- g^{\omega^{A+y}}_\delta(z+y,z+y)\Bigr)\Bigr]}{\E \Bigl[g^{\omega^{A+y}}_\delta (0,z+y)\Bigr]}.
\end{equation}

\medskip

\medskip
\begin{lemma}
\label{Sabot}
Assume that the measure $\mathbb P$ satisfies the
local drift condition {\bf (LD)} [c.f. (\ref{HIP})]. Let $\eta>0$. Then there exists a constant $c_6=c_6(\eta)>0$ and $\epsilon_0>0$
such that for each $\epsilon\le\epsilon_0$ we have that for all $z \in A$, $e \in V$
and $y \in \Z^d$ one has that

\begin{equation}
\label{Sabot1}
\varlimsup_{\delta \to 1} |J^\delta_e (y,z)-J_{p^*_\epsilon}(z+e)| \leq c_6\phi^\epsilon(z+e)\epsilon^{1-\eta}.
\end{equation}
\end{lemma}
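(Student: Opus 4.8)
The goal is to show that $J_e^\delta(y,z)$, defined in (\ref{JGreen}) as a ratio of expectations of Green functions of the $\tau_\delta$-killed walk in the multiply-perturbed environment $\omega^{A+y}$, converges as $\delta\to 1$ to the potential kernel $J_{p^*_\epsilon}(z+e)$ of the reversed homogeneous walk, with an error controlled by $\phi^\epsilon(z+e)\epsilon^{1-\eta}$.

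\medskip

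\noindent\emph{Plan of proof.} The strategy is to reduce the quenched, multiply-perturbed quantity $J_e^\delta(y,z)$ to a purely deterministic quantity built from the homogeneous kernel $p_\epsilon$, and then to identify the $\delta\to 1$ limit of that deterministic quantity as the potential kernel. First I would replace the perturbed Green functions $g^{\omega^{A+y}}_\delta$ by the Green functions $g^{p_\epsilon}_\delta$ of the homogeneous walk with kernel $p_\epsilon$. Since $\omega^{A+y}$ agrees with the homogeneous environment $p_\epsilon$ off the finite set $A+y$, and differs from it there by fluctuations of size $O(\epsilon)$ (by (\ref{xi}) and (\ref{pert})), I would invoke the multi-site Green expansion (\ref{Green expansion1}) of Lemma \ref{gfe} to bound the relative error of each replacement by a constant times $\epsilon$ uniformly in $\delta$. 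Taking expectations and using (\ref{expansion1a})-type domination, this shows that $J_e^\delta(y,z)$ equals its homogeneous analogue
$$
\tilde J_e^\delta(z):=\delta g^{p_\epsilon}_\delta(z+e,0)-g^{p_\epsilon}_\delta(z,z)
$$
up to an additive error of order $\epsilon$ times the relevant Green ratio; the translation invariance of the homogeneous walk is what removes the dependence on $y$ and collapses the ratio of expectations to a difference of deterministic Green functions.

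\medskip

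\noindent The second step is to pass to the limit $\delta\to 1$ in $\tilde J_e^\delta(z)$ and identify it with $J_{p^*_\epsilon}(z+e)$. By the standard relation between the killed and unkilled walks, $g^{p_\epsilon}_\delta(x,y)=\sum_{k\ge 0}\delta^k p_k(x,y)$, so that
$$
\delta g^{p_\epsilon}_\delta(z+e,0)-g^{p_\epsilon}_\delta(z,z)=\sum_{k\ge 0}\delta^{k+1}p_{k+1}(z+e,0)-\sum_{k\ge 0}\delta^k p_k(z,z),
$$
which, after reindexing and using translation invariance $p_k(z+e,0)=p_k(0,-(z+e))$ and $p_k(z,z)=p_k(0,0)$, is an Abel (generating-function) regularization of $\sum_k\bigl(p_k(0,-(z+e))-p_k(0,0)\bigr)$. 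Recognizing the reversal $p^*_\epsilon$ through $p^*_\epsilon(e)=p_\epsilon(-e)$, this Abel sum converges as $\delta\to 1$ precisely to the potential kernel $J_{p^*_\epsilon}(z+e)$ of (\ref{jepsilon}); under condition \textbf{(LD)} the walk has drift of size $\epsilon$ in direction $e_1$, so the series (\ref{jepsilon}) converges absolutely and the Abel limit agrees with the ordinary limit. I would quote Lemma 3 of \cite{Sa04} here as the single-site precursor, since the $\delta\to 1$ identification of the Abel-summed Green difference with the potential kernel is exactly its content.

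\medskip

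\noindent Finally, the weight $\phi^\epsilon(z+e)$ enters through the quantitative control of the error terms. The factor $\phi^\epsilon(v)=\prod_i(p_\epsilon(-e_i)/p_\epsilon(e_i))^{v_i/2}$ in (\ref{auxiliar}) is precisely the Radon--Nikodym factor relating the walk with kernel $p_\epsilon$ to its reversal, and it measures the exponential decay of the Green function of the drifting walk away from the origin; thus $g^{p_\epsilon}_\delta(0,z+e)$ and the associated ratios carry a factor comparable to $\phi^\epsilon(z+e)$. I would track this factor through the two steps above so that both the homogenization error from Lemma \ref{gfe} and the Abel-truncation error acquire the prefactor $\phi^\epsilon(z+e)$, and then absorb the resulting power of $\epsilon$ into $\epsilon^{1-\eta}$ (the loss of $\eta$ being the usual slack needed to dominate logarithmic or boundary corrections uniformly in the finite set $A$). \emph{The main obstacle} I anticipate is the uniformity of the error estimates in $\delta$ as $\delta\to 1$: the individual Green functions $g^{p_\epsilon}_\delta$ diverge in the recurrent-looking regime (small drift $\epsilon$), so the homogenization bound from Lemma \ref{gfe} and the Abel convergence must be shown to hold with constants independent of $\delta$, controlled only through the drift lower bound $\E[d(0,\omega)]\cdot e_1\ge\epsilon$ of \textbf{(LD)}. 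Establishing this $\delta$-uniform decay, with the correct $\phi^\epsilon$-weight and the $\epsilon^{1-\eta}$ order, is the technical heart of the lemma.
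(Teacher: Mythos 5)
There is a genuine gap, and it begins with a misreading of what $\omega^{A+y}$ is. You write that ``$\omega^{A+y}$ agrees with the homogeneous environment $p_\epsilon$ off the finite set $A+y$''; it is the other way around: $\omega^{A+y}$ is obtained from the random environment $\omega$ by replacing its values \emph{on} $A+y$ by deterministic ones, and it retains the random values $\omega(x,\cdot)=p_\epsilon(\cdot)+\epsilon\bar\xi(x,\cdot)$ at \emph{every} site outside $A+y$ (this is exactly what makes $g_\delta^{\omega^{A+y}}$ independent of $f\circ\theta_y$ in the proof of Proposition \ref{expansion}). Consequently your first step --- replacing $g^{\omega^{A+y}}_\delta$ by the homogeneous Green function $g^{p_\epsilon}_\delta$ with relative error $O(\epsilon)$ via Lemma \ref{gfe} --- cannot work: that lemma controls perturbations on a \emph{finite} set $B$, with constants like $c_3$ in (\ref{constant1}) that grow with the cardinality $n$ and with $(\delta\kappa)^{-\rho(B)}$, whereas here the two environments differ at infinitely many sites. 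Dealing with the randomness at all sites outside $A+y$, uniformly in $\delta$, is precisely the difficulty of the lemma, and no finite-perturbation estimate addresses it.

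The paper's proof (following Sabot's Lemma 3) resolves this with a device you never invoke: Kalikow's auxiliary environment. Because $J^\delta_e(y,z)$ is a \emph{ratio} of annealed expectations weighted by $g_\delta^{\omega^{A}}(0,y+z)$, a generalized Kalikow identity rewrites it exactly as $\delta g_\delta^{\tilde\omega}(z+y+e,y)-g_\delta^{\tilde\omega}(z+y,z+y)$ for a single deterministic (inhomogeneous) environment $\tilde\omega$. The key computation --- using the independence of $g_\delta^{\omega^{A\cup\{x\}}}$ from $\bar\xi(x,e)$ together with (\ref{Green5}) --- shows that $\tilde\omega(x,e)=\E[\omega(x,e)]+\epsilon^2\Delta\omega(x,e)$ with $\Delta\omega$ uniformly bounded: the fluctuation seen by the Kalikow walk is $O(\epsilon^2)$, not $O(\epsilon)$, and this gain of one order of $\epsilon$ is what makes the comparison with the homogeneous kernel $p_\epsilon$ close uniformly in $\delta$ and produces an error $\epsilon^{1-\eta}$ after Sabot's steps 2--3, together with the weight $\phi^\epsilon(z+e)$ coming from the exponential decay of the drifting Green function. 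Your Abel-summation identification of $\lim_{\delta\to 1}\bigl(\delta g^{p_\epsilon}_\delta(z+e,0)-g^{p_\epsilon}_\delta(z,z)\bigr)$ with $J_{p^*_\epsilon}(z+e)$ is the correct (and comparatively easy, purely deterministic) endgame, but citing Sabot's Lemma 3 only for that step inverts its content: the heart of that lemma, and of the one you are proving, is the reduction of the random, annealed quantity to an $O(\epsilon^2)$-perturbed deterministic walk, which your outline omits entirely.
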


\begin{proof} We will just give an outline of the proof, stressing the
steps where modifications have to be made with respect to the
proof of Lemma 3 of \cite{Sa04}.
For each $z \in A$, $y \in \Z^d$ we define
$$
\tilde{\P}:=\frac{g_\delta^{\omega^A}(0, y+z)}{\E \Bigl[g_\delta^{\omega^A}(0, y+z)\Bigr]} \P.
$$
Now, using a generalized version of a result of Kalikow \cite{K81}, stated in \cite{Sa04}, we can see that
$$
J_e^\delta(y,z)=\delta g_\delta^{\tilde{\omega}}(z+y+e, y)-g_\delta^{\tilde{\omega}}(z+y, z+y),
$$
where $g_\delta^{\tilde{\omega}}$ denotes the Green function of 
 Kalikow random walk, defined by its transition probabilities 
$\tilde{\omega}(x,e)$ given by
$$
\tilde{\omega}(x,e):=\frac{\tilde{\E}\Bigl[g_\delta^{\omega^A}(y+z,x)\omega^A (x,e)\Bigr]}{\tilde{\E}\Bigl[g_\delta^{\omega^A}(y+z,x)\Bigr]},
$$
for each $x \in \Z^d$ and $e \in V$. Here $\tilde{\E}$ is the expectation with respect to $\tilde{\P}$. It is easy to verify that
\begin{equation}
\label{env1}
\tilde{\omega}(x,e)=\left\{\begin{array}{ccc}
\E[\omega(x,e)]+\epsilon \frac{\tilde{\E}\Bigl[g_\delta^{\omega^A}(y+z,x)\bar{\xi} (x,e)\Bigr]}{\tilde{\E}\Bigl[g_\delta^{\omega^A}(y+z,x)\Bigr]} & {\rm if} & x \not \in A, \\
\E[\omega(x,e)] & {\rm if} & x \in A.
\end{array}
\right.
\end{equation}
Using twice (\ref{Green5}), we can deduce that
\begin{eqnarray}
\label{env2}
& \frac{\tilde{\E}\Bigl[g_\delta^{\omega^A}(y+z,x)\bar{\xi} (x,e)\Bigr]}{\tilde{\E}\Bigl[g_\delta^{\omega^{A}}(y+z,x)\Bigr]}= \frac{\tilde{\E}\Bigl[g_\delta^{\omega^{A \cup \{x\}}}(y+z,x)\bar{\xi} (x,e)\Bigr]}{\tilde{\E}\Bigl[g_\delta^{\omega^{A \cup \{x\}}}(y+z,x)\Bigr]}+O(\epsilon) \nonumber \\
& = \frac{\E \Bigl[g_\delta^{\omega^A}(0,y+z)\, g_\delta^{\omega^{A \cup \{x\}}}(y+z,x)\,\bar{\xi} (x,e)\Bigr]}{\E\Bigl[g_\delta^{\omega^{A}}(0,y+z)\,g_\delta^{\omega^{A \cup \{x\}}}(y+z,x)\Bigr]}+O(\epsilon)\nonumber \\
&=O(\epsilon),
\end{eqnarray}
where in the last step, we used the independence of $g_\delta^{A\cup \{x\}}$ with $\bar{\xi}(x,e)$ and $|O(\epsilon)| \leq c_7 \epsilon$, where $c_7$ is a constant, which depends on $\kappa, d$ and cardinality of $A$ (see (\ref{constant1})). Now, from (\ref{env1}) and (\ref{env2}), we can deduce that for each $x \in \Z^d$ and $e \in V$ the following identity is satisfied
$$
\tilde{\omega}(x,e)=\E[\omega(x,e)]+\epsilon^2 \Delta \omega(x,e),
$$ 
where $\Delta \omega(x,e)$ is uniformly bounded in $x, e, y, z, \delta, \epsilon$. 
The following steps of the proof are identical to steps 2 and 3 of Lemma 3 of \cite{Sa04}.
\end{proof}

\medskip

We can now continue with the proof of Proposition \ref{expansion}.
Note that using the definition (\ref{JGreen}), we can
rewrite  (\ref{fexp3}) as

\begin{eqnarray}
\label{fexp3.2}
 &\frac{\E \Bigl[\left(g^{\omega}_{\delta}(0,y)-g^{\omega^{A+y}}_{\delta}(0,y)\right)\bar{f} \left(\theta_y \omega\right)\Bigr]}{\E \Bigl[g^{\omega}_{ \delta}(0,y)\Bigr]} \nonumber \\
 & = \epsilon \sum_{z \in A}\sum_{e \in V} {\rm Cov}\,\Bigl[\xi(z,e), f(\omega)\Bigr] J_e^\delta (y,z) \times \frac{\E \Bigl[g_\delta^{\omega^{A+y}}(0, z+y)\Bigr]}{\E \Bigl[g^{\omega^{A+y}}_{\delta}(0,y)\Bigr]} \nonumber \\
 &+ \sum_{z \in A}\sum_{e \in V} {\rm Cov} \, \Bigl[\xi(z,e), f(\omega)\Bigr] J_e^\delta (y,z)\times \frac{\E[g_\delta^{\omega^{A+y}} (0,z+y)]}{\E[g_\delta^{\omega^{A+y}} (0,y)]}\times \frac{\E[g_\delta^\omega (0,y)O_2(\epsilon^2)]}{\E[g^\omega_\delta (0,y)]} \nonumber \\
 & + \frac{\E[g_\delta^\omega (0,y)\bar{f}(\theta_y \omega) O_1(\epsilon^2)]}{\E[g_\delta^\omega (0,y)]}.
\end{eqnarray}
On the other hand, with the help of (\ref{Sabot1}) of Lemma \ref{Sabot}, it follows that for each $\eta>0$ we can choose  $\delta_0$ 
such that for $\delta\ge\delta_0$ one has that

\begin{equation}
\label{lemma11}
 |J^\delta_e (y,z)-J_{p^*_\epsilon}(z+e)| \leq 2c_{6}\phi^\epsilon(z+e)\epsilon^{1-\eta}.
\end{equation}
Hence for  $\delta\ge\delta_0$ we conclude from (\ref{fexp3.1})  that

\begin{eqnarray}
\label{fexp4}
 &\frac{\E \Bigl[\left(g^{\omega}_{\delta}(0,y)-g^{\omega^{A+y}}_{\delta}(0,y)\right)\bar{f} \left(\theta_y \omega\right)\Bigr]}{\E \Bigl[g^{\omega}_{ \delta}(0,y)\Bigr]} \nonumber \\
 & = \epsilon \sum_{z \in A}\sum_{e \in V} {\rm Cov}\,\Bigl[\xi(z,e), f(\omega)\Bigr]J_{p_\epsilon^*}(z+e) \times \frac{\E \Bigl[g_\delta^{\omega^{A+y}}(0, z+y)\Bigr]}{\E \Bigl[g^{\omega^{A+y}}_{\delta}(0,y)\Bigr]} \nonumber \\
& + O_3(\epsilon^{2-\eta}) \sum_{z \in A}\sum_{e \in V} {\rm Cov}\,\Bigl[\xi(z,e), f(\omega)\Bigr] \nonumber \\
 &+ \sum_{z \in A}\sum_{e \in V} {\rm Cov} \, \Bigl[\xi(z,e), f(\omega)\Bigr] J_{p_\epsilon^*}(z+e)\,\, \frac{\E[g_\delta^\omega (0,y)O_4(\epsilon^2)]}{\E[g^\omega_\delta (0,y)]} \nonumber\\
 &+ \sum_{z \in A}\sum_{e \in V} {\rm Cov} \, \Bigl[\xi(z,e), f(\omega)\Bigr] \,\, \frac{\E[g_\delta^\omega (0,y)O_5(\epsilon^{3-\eta})]}{\E[g^\omega_\delta (0,y)]} \nonumber\\ 
 &+ \frac{\E[g_\delta^\omega (0,y)\bar{f}(\theta_y \omega) O_1(\epsilon^2)]}{\E[g_\delta^\omega (0,y)]}.
\end{eqnarray}
where we have used the fact that the expression
 $\frac{\E \Bigl[g_\delta^{\omega^{A+y}}(0, z+y)\Bigr]}{\E \Bigl[g^{\omega^{A+y}}_{\delta}(0,y)\Bigr]}$ can be bounded by $\frac{1}{(\delta\, \kappa)^{\rho(A)}}$  choosing a non-random nearest neighbor self-avoiding path from $y$ to $z+y$ and using (\ref{unif1}) of Lemma \ref{unif} at most $\rho(A)$ times, and where

\begin{equation}
\label{fexp4.1}
|O_3(\epsilon^{2-\eta})| \leq \frac{2c_{6} c_8 \epsilon^{2-\eta}}{(\delta \kappa)^{\rho(A)}},  \quad |O_4(\epsilon^2)| \leq \frac{\left(\frac{4dn}{\kappa^2}\Bigl[\frac{2d \epsilon}{\kappa^2}+1\Bigr]^{n-1}\right)}{(\delta \kappa)^{\rho(A)}}\epsilon^2,
\end{equation}
\begin{equation}
\label{fexp4.2}
|O_5(\epsilon^{3-\eta})| \leq 2 \frac{\left(\frac{4dn}{\kappa^2}\Bigl[\frac{2d \epsilon}{\kappa^2}+1\Bigr]^{n-1}\right)}{(\delta \kappa)^{\rho(A)}}c_{6} c_8\epsilon^{3-\eta}.
\end{equation}
and
$$
c_:=\sup_{z \in A, \, e \in V}|\phi^{\epsilon}(z+e)|.
$$
In addition, if we use again (\ref{Green expansion1}) of Lemma \ref{gfe}, we can say that
\begin{equation}
\label{fexp5}
\frac{\E \Bigl[g_\delta^{\omega^{A+y}}(0, z+y)\Bigr]}{\E \Bigl[g^{\omega}_{\delta}(0,z+y)\Bigr]}=1+\frac{\E \Bigl[g^\omega_\delta (0,z+y)O_6(\epsilon)\Bigr]}{\E \Bigl[g^\omega_\delta (0,z+y)\Bigr]},
\end{equation}
and
\begin{equation}
\label{fexp5a}
\frac{\E \Bigl[g_\delta^{\omega}(0,y)\Bigr]}{\E \Bigl[g^{\omega^{A+y}}_{\delta}(0,y)\Bigr]}=\frac{\E \Bigl[g^\omega_\delta (0,y)\Bigr]}{\E \Bigl[g^\omega_\delta (0,y)\Bigl(1+O_7(\epsilon)\Bigr)\Bigr]},
\end{equation}
provided that for each $\tau>0$ one has that
\begin{equation}
\label{fexp5.1}
|O_i(\tau)| \leq c_3 \tau, \quad \forall i=6,7.
\end{equation}
Using (\ref{fexp5}) and (\ref{fexp5a}) for the first term of the right-hand side of (\ref{fexp4}) and using once more  (\ref{lemma11}),
we see that for $\delta\ge \delta_0$ one has that
\begin{eqnarray}
\label{fexp6}
& \frac{\E \Bigl[\left(g^{\omega}_{\delta}(0,y)-g^{\omega^{A+y}}_{\delta}(0,y)\right)\bar{f} \left(\theta_y \omega\right)\Bigr]}{\E \Bigl[g^{\omega}_{ \delta}(0,y)\Bigr]} \nonumber \\
 &  = \epsilon \frac{1}{{\E \Bigl[g^{\omega}_{\delta}(0,y)\Bigr]}}\sum_{z \in A}\sum_{e \in V} {\rm Cov}\,\Bigl[\xi(z,e), f(\omega)\Bigr]J_{p^*_\epsilon}(z+e) \times \E \Bigl[g_\delta^{\omega}(0, z+y)\Bigr] \nonumber \\
& +  \sum_{z \in A}\sum_{e \in V} {\rm Cov}\,\Bigl[\xi(z,e), f(\omega)\Bigr]J_{p^*_\epsilon}(z+e) \times \frac{\E \Bigl[O_8(\epsilon^2)g_\delta^{\omega}(0, z+y)\Bigr]}{\E \Bigl[g_\delta^\omega (0,z+y)\Bigr]} \nonumber \\
& - \sum_{z \in A}\sum_{e \in V} {\rm Cov}\,\Bigl[\xi(z,e), f(\omega)\Bigr]J_{p^*_\epsilon}(z+e) \times \frac{\E \Bigl[O_9(\epsilon^2)g_\delta^{\omega}(0,y)\Bigr]}{\E \Bigl[g_\delta^\omega (0,y)\Bigl(1+O_7(\epsilon)\Bigr)\Bigr]} \nonumber \\
& - \sum_{z \in A}\sum_{e \in V} {\rm Cov}\,\Bigl[\xi(z,e), f(\omega)\Bigr]J_{p^*_\epsilon}(z+e) \times 
\frac{\E \Bigl[O_8(\epsilon^2)g_\delta^{\omega}(0, z+y)\Bigr]}{\E \Bigl[g_\delta^\omega (0,z+y)\Bigr]}
\times \frac{\E \Bigl[O_9(\epsilon)g_\delta^{\omega}(0,y)\Bigr]}{\E \Bigl[g_\delta^\omega (0,y)\Bigl(1+O_7(\epsilon)\Bigr)\Bigr]}
 \nonumber \\
& + O_3(\epsilon^{2-\eta}) \sum_{z \in A}\sum_{e \in V} {\rm Cov}\,\Bigl[\xi(z,e), f(\omega)\Bigr]  \nonumber \\
 & + \sum_{z \in A}\sum_{e \in V} {\rm Cov} \, \Bigl[\xi(z,e), f(\omega)\Bigr] J_{p^*_\epsilon}(z+e)\,\, \frac{\E[g_\delta^\omega (0,y)O_4(\epsilon^2)]}{\E[g^\omega_\delta (0,y)]} \nonumber\\
 & + \sum_{z \in A}\sum_{e \in V} {\rm Cov} \, \Bigl[\xi(z,e), f(\omega)\Bigr] \,\, \frac{\E[g_\delta^\omega (0,y)
O_5(\epsilon^{3-\eta})]}{\E[g^\omega_\delta (0,y)]} \nonumber\\ 
 & + \frac{\E[g_\delta^\omega (0,y)\bar{f}(\theta_y \omega) O_1(\epsilon^2)]}{\E[g_\delta^\omega (0,y)]},
\end{eqnarray}
where, by (\ref{fexp5.1}), we know that for each $\tau>0$
\begin{equation}
\label{fexp5.2}
|O_i(\tau)| \leq \frac{c_3}{(\delta \kappa)^{\rho(A)}} \tau, \quad \forall i=8,9.
\end{equation}   
Defining
\begin{eqnarray*}
& h_1:=\sum_{z \in A} \sum_{e \in V} \bar{\xi}(z,e) J_{p^*_\epsilon}(z+e) \times \frac{\E \Bigl[O_8 (\epsilon^2)g^\omega_\delta (0, z+y)\Bigr]}{E \Bigl[g^\omega_\delta (0, z+y)\Bigr]}.\nonumber \\
& h_2:=-\sum_{z \in A} \sum_{e \in V} \bar{\xi}(z,e) J_{p^*_\epsilon}(z+e) \times \frac{\E \Bigl[O_9 (\epsilon^2)g^\omega_\delta (0, y)\Bigr]}{E \Bigl[g^\omega_\delta (0, y)(1+O_7(\epsilon))\Bigr]}.\nonumber \\
& h_3:=-\sum_{z \in A} \sum_{e \in V} \bar{\xi}(z,e) J_{p^*_\epsilon}(z+e) \times \frac{\E \Bigl[O_8 (\epsilon^2)g^\omega_\delta (0, z+y)\Bigr]}{E \Bigl[g^\omega_\delta (0, z+y)\Bigr]} \times \frac{\E \Bigl[O_9 (\epsilon)g^\omega_\delta (0, y)\Bigr]}{E \Bigl[g^\omega_\delta (0, y)(1+O_7(\epsilon))\Bigr]}. \nonumber \\
& h_4:=\sum_{z \in A} \sum_{e \in V} \bar{\xi}(z, e) O_3(\epsilon^{2-\eta}). \nonumber \\
& h_5:=\sum_{z \in A} \sum_{e \in V} \bar{\xi}(z,e) J_{p^*_\epsilon}(z+e) \times \frac{\E \Bigl[O_4 (\epsilon^2)g^\omega_\delta (0,y)\Bigr]}{E \Bigl[g^\omega_\delta (0,y)\Bigr]}.\nonumber \\
& h_6:=\sum_{z \in A} \sum_{e \in V} \bar{\xi}(z,e) \times \frac{\E \Bigl[O_5 (\epsilon^{3-\eta})g^\omega_\delta (0,y)\Bigr]}{E \Bigl[g^\omega_\delta (0,y)\Bigr]}\nonumber \\
& h_7:=\frac{g^\omega_\delta (-y,0)\tilde{O}_1(\epsilon^2)}{\E[g^\omega_\delta (0,y)]}, \quad {\rm with} \quad \tilde{O}_1(\epsilon^2)(\omega):=O_1(\epsilon^2)\Bigl(\theta_{(-y)}\omega\Bigr).\nonumber \\
& h_8:=-\frac{\E[g^\omega_\delta (0,y)O_1(\epsilon^2)]}{\E[g^\omega_\delta (0,y)]},\nonumber \\
\end{eqnarray*}
we can rewrite (\ref{fexp6}) in the following way
\begin{eqnarray}
\label{fexp7}
&\frac{\E \Bigl[\left(g^{\omega}_{\delta}(0,y)-g^{\omega^{A+y}}_{\delta}(0,y)\right)\bar{f} \left(\theta_y \omega\right)\Bigr]}{\E \Bigl[g^{\omega}_{ \delta}(0,y)\Bigr]} \nonumber \\
 & = \epsilon \frac{1}{{\E \Bigl[g^{\omega}_{\delta}(0,y)\Bigr]}}\sum_{z \in A}\sum_{e \in V} {\rm Cov}\,\Bigl[\xi(z,e), f(\omega)\Bigr]J_{p^*_\epsilon}(z+e)  \E \Bigl[g_\delta^{\omega}(0, z+y)\Bigr]+E[h_\delta f], \end{eqnarray}
where $h_\delta:=\sum_{i=1}^8 h_i$. Now, in order to show that (\ref{expansion1a}) is satisfied, it is necessary to justify that for any $z \in A$ and $e\in V$, $J_{p^*_\epsilon} (z+e)$ is bounded. If we use (\ref{fexp2.1}), (\ref{fexp3.1}), (\ref{fexp4.1}), (\ref{fexp4.2}), (\ref{fexp5.1}), (\ref{fexp5.2})
and the fact that for each $e \in V$ and $z \in A$, $\bar{\xi}(z,e)$ is bounded by $2$, it is easy to deduce that there exists $\epsilon_0>0$ and an constant $c_9=c_9(\eta)$ such that  for all $0<\epsilon \leq \epsilon_0$,
whenever $\delta$ is close enough to $1$, the following inequality is satisfied for all $1\le i\le 8$,
\begin{equation}
\label{fexp8}
\Bigl|\E[|h_i| f] \Bigr| \leq \Bigl| O(\epsilon^{2-\eta}) \Bigr| \E[|f|] , \quad {\rm with} \,\,\Bigl|O(\epsilon^{2-\eta})\Bigr| \leq c_9 \epsilon^{2-\eta}.
\end{equation}
In the case of $h_7$, if we apply independence and use  (\ref{constant1}), (\ref{Green5}) and (\ref{fexp2.1}) , one can deduce that for $0<\epsilon \leq \epsilon_0$ exists a constant $c_{10}>0$ such that
\begin{eqnarray}
\label{fexp9}
& \Bigl|\E[|h_7| f] \Bigr| \leq \frac{1}{\E[g^\omega_\delta (0,y)]}\times \E \left[g^{\omega^{A+y}}_\delta (0,y) \cdot |f(\theta_y \omega)| \cdot \frac{g^{\omega}_\delta (0,y)}{g^{\omega^{A+y}}_\delta(0,y)} |O_1(\epsilon^2)|\right] \nonumber \\
& \leq \Bigl| O(\epsilon^2) \Bigr | \E[|f|],
\end{eqnarray}
where $|O(\epsilon^2)|\leq c_{10} \epsilon^2$.
Finally, with the help of (\ref{fexp8}) and (\ref{fexp9}),  Proposition \ref{expansion} is easily proven.

\section{Proof of Theorem \ref{theorem1}}
\label{proof}
In this section we will prove Theorem \ref{theorem1}. Note that, with the help of (\ref{nud}) and Proposition \ref{expansion}, there exists an $\epsilon_0>0$ and a constant $c_4$ such that for $\delta$ close enough
to $1$, for all  $y \in \Z^d$ and $0<\epsilon \leq \epsilon_0$ we have 
that

\begin{eqnarray*}
& \int f d\mu_{\delta}=\frac{\sum_{y \in \Z^d} \left(\E [g^\omega_\delta (0,y)] \E[f]+\epsilon 
\sum\limits_{z \in A,e \in V} {\rm Cov}\,\Bigl[\xi(z,e), f(\omega)\Bigr] \nonumber  
J_{p^*_\epsilon}(z+e)
 \E \Bigl[g_{\delta}^{\omega}(0,z+y)\Bigr]
\nonumber +\E [g^\omega_\delta (0,y)]\E[h_\delta f]
\right)}{\sum_{y \in \Z^d}\E[g^\omega_\delta (0,y)]}\\
 & = \E[f]+\epsilon\sum\limits_{z \in A,e \in V} {\rm Cov}\,\Bigl[\xi(z,e), f(\omega)\Bigr]
J_{p^*_\epsilon}(z+e)
+\E[h_\delta f],
\end{eqnarray*} 
where for all $0<\epsilon \leq \epsilon_0$, 
we have that  

\begin{equation}
\label{bbound}
|\E[|h_\delta| f]| \leq \Bigl |O(\epsilon^{2-\eta}) \Bigr|\E[|f|],
\end{equation}
with $|O(\epsilon^{2-\eta})|\le c_4\epsilon^{2-\eta}$.
Taking now the limit when $\delta\to 1$,
by Proposition \ref{polinomial} we conclude that

$$
\int fd\mathbb Q=\E[f]+\epsilon\sum\limits_{z \in A,e \in V} {\rm Cov}\,\Bigl[\xi(z,e), f(\omega)\Bigr]
J_{p^*_\epsilon}(z+e)
+\int fd\mathbb V,
$$
where by (\ref{bbound}), $\mathbb V$ is a signed measure satisfying

$$
\left|\int fd|\mathbb V|\right|\le \Bigl|O(\epsilon^{2-\eta}) \Bigr|\mathbb E[|f|],
$$
where $|\mathbb V|$ is the variation of $\mathbb V$.
Hence, the restriction of $\mathbb Q$ to $A$ is absolutely
continuous with respect to $\mathbb P_A$, from where we can conclude
that  there is a function $h$ such that $\mathbb E[|h|]<\infty$,
and such that for every bounded and continuous function $f$ one has that

\begin{equation}
\label{fh}
|\mathbb E[f|h|]|\le \Bigl|O(\epsilon^{2-\eta}) \Bigr|\mathbb E[|f|]
\end{equation}
and

$$
\int fd\mathbb Q=\E[f]+\epsilon\sum\limits_{z \in A,e \in V} {\rm Cov}\,\Bigl[\xi(z,e), f(\omega)\Bigr]
J_{p^*_\epsilon}(z+e)
+\mathbb E[fh].
$$
Now, approximating any function $f$ in $L_1$ by bounded continuous functions, it is
easy to check that in fact (\ref{fh}) is satisfied for every $f\in L_1$. Therefore,
$h$ is bounded, from where we conclude the proof of Theorem \ref{theorem1}.

\section{Proof of Corollary \ref{corollary1}}
\label{end}
Here we prove Corollary \ref{corollary1}. It is enough
to show that  $J_{p^*_\epsilon}$ [c.f. (\ref{jepsilon})] is
well approximated by $J_{p^*_0}$. By
standard Fourier inversion formulas (see for
example Spitzer \cite{Sp64}) we can conclude  that for each $z\in\mathbb Z^d$ and $e\in V$ one has that

\begin{eqnarray}
\label{p01}
& J_{p^*_\epsilon}(z+e)=\frac{1}{(2 \pi)^d}  
\left( \prod_{j=1}^d \left(\frac{p^\epsilon(-e_j)}{p^\epsilon(e_j)}\right)^{\frac{z_j+e_j}{2}} -1\right) \int_{[0,2\pi]^d}\frac{\cos \left(\sum_{j=1}^d (z_j+e_j) x_j\right)}{1-2 \sum_{j=1}^d \sqrt{p^\epsilon (e_j)p^\epsilon (-e_j)}\, \cos(x_j)}\, \prod dx_j \nonumber\\ & + \frac{1}{(2 \pi)^d} \int_{[0,2\pi]^d}\frac{\cos \left(\sum_{j=1}^d (z_j+e_j) x_j\right)-1}{1-2 \sum_{j=1}^d \sqrt{p^\epsilon (e_j)p^\epsilon (-e_j)}\, \cos(x_j)}\, \prod dx_j,
\end{eqnarray}
where for each $1\le j\le d$, $z_j$ is the $j$-th coordinates of $z$.
When $p_0=0$, we can conclude from (\ref{p01}) that

\begin{equation}
\nonumber
J_{p^*_\epsilon}(z+e)=
\begin{cases}
J_{p_0^*}(z+e)+O(\epsilon\log\epsilon)&\quad{\rm if}\quad d=2\\
J_{p_0^*}(z+e)+O(\epsilon)&\quad{\rm if}\quad d\ge 3,
\end{cases}
\end{equation}
For the case $p_0\ne 0$, a simpler estimation gives us that
for any dimension $d\ge 2$ one has that
$$
J_{p^*_\epsilon}(z+e)=
J_{p_0^*}(z+e)+O(\epsilon).
$$

\end{document}